\documentclass[10pt,twoside,reqno]{amsart}
\usepackage{amsmath}
\usepackage{amssymb}
\usepackage{mathrsfs}
\usepackage[mathcal]{eucal}
\usepackage[vcentermath]{youngtab}
\usepackage[shortalphabetic, initials]{amsrefs}
\newcount\NBcount

\long\def\NB#1{\advance\NBcount by 1{\Tiny\boxed{\number\NBcount}}%
  %\typeout{^^JNB: occurs on input line \the\inputlineno \space page \arabic{page}^^J}
  \marginpar{\raggedright\Small [\number\NBcount] #1}}

%%% Your numbering system for theorems etc. may be different from the one suggested below. %%%

\newtheorem{Thm}{Theorem}
\newtheorem{Prop}{Proposition}
\newtheorem{Lem}[Prop]{Lemma}
\newtheorem{Cor}[Prop]{Corollary}

%%% Definitions, remarks and examples are NOT italicized:  %%%

\theoremstyle{definition}
\newtheorem{Def}{Definition}
\newtheorem{Ex}[Def]{Example}
\newtheorem{Rem}[Def]{Remark}

\def\le{\leqslant}
\def\ge{\geqslant}
\def\C{{\mathbb C}}
\def\R{{\mathbb R}}
\def\Q{{\mathbb Q}}
\def\Z{{\mathbb Z}}
\def\N{{\mathbb N}}
\def\codim{\operatorname{codim}}
\let\parasymbol=\S
\def\secref#1{\parasymbol\ref{#1}}
\def\S{\varSigma}

\def\Diff{\operatorname{Diff}}

\def\l{\lambda}

\def\L{\varLambda}
\def\e{\varepsilon}

\def\^#1{\widehat{#1}}

\def\:{\colon}
\def\O{{\mathscr O(\C^d,0)}}

\def\~#1{\tilde{#1}}

\def\Aut{\operatorname{Aut}}

\begin{document}

\title[Dynamics of intersections]{Local dynamics of intersections:\\ V. I. Arnold's theorem revisited}

\author{Anna Leah Seigal, Sergei Yakovenko}
\date{August 2, 2012}

\address{
Cambridge University\\
UK\\
\textup{\texttt{anna.seigal1@gmail.com}}
}

\address{
Weizmann Institute of Science\\
Israel\\
\textup{\texttt{sergei.yakovenko@weizmann.ac.il}}
}

\begin{abstract}
V. I. Arnold proved in 1991 (published in 1993) that the intersection multiplicity between two germs of analytic subvarieties at a fixed point of a holomorphic invertible self-map remains bounded when one of the germs is dragged by iterations of the self map. The proof is based on the Skolem-Mahler-Lech theorem on zeros in recurrent sequences. 

We give a different proof, based on the Noetherianity of certain algebras, which allows to generalize the Arnold's theorem for local actions of arbitrary finitely generated commutative groups, both with discrete and infinitesimal generators. Simple examples show that for non-commutative groups the analogous assertion fails.
\end{abstract} 

\maketitle

\section{Complexity of iterated intersections}

\subsection{Complexity growth by iterations}
Let $F:M\to M$ be a smooth self-map of a differentiable manifold $M$ into itself. For a pair $X,Y\subseteq M$ of two subvarieties of complimentary dimensions $\dim X+\dim Y=\dim M$, one can consider the intersection $F^n(X)\cap Y$ between the image of $X$ by the $n$th iteration $F^n=F\circ\cdots \circ F$ and $Y$. If this intersection consists of finitely many points, then the growth rate of the integer sequence $m_n=m_n(F,X,Y)=\#\{F^n(X)\cap Y\}\in\N$ for $n=0,1,2,\dots$ is an important indicator of the mixing properties of the dynamical system. In particular, this construction allows to measure the growth of the number of $n$-periodic points (the fixed points of $F^n$). To that end, one has to replace $M$ by its square $M\times M$ and $F$ by the product $F\times\operatorname{id}$, choosing $X$ and $Y$ to be the same subvariety, the diagonal $M\subseteq M\times M$. Then the intersection after $n$ iterations will coincide with the set of $n$-periodic points.  

The problem of estimating the possible growth rate of the sequence $m_n$ for various classes of dynamical systems was discussed by V.~I.~Arnold in a series of papers in the beginning of 1990ies. If $M$ is an algebraic variety and $F$ an algebraic map, then the sequence $m_n$ grows no faster than exponentially by the B\'ezout theorem. In \cites{a-smooth,a-boston} it was shown that the same exponential growth rate is characteristic also for a \emph{generic} smooth map $F$, with the genericity conditions explicitly spelled out. The parallel exponential upper bound for the growth of the number of periodic points is given by the Artin--Mazur theorem \cite{katok}. The results remain true if in the definition of the numbers $m_n$ the intersection points are counted with their \emph{multiplicity}, see below.

However, if we replace the global problem with a local one, the situation changes radically. Let $F:(\R^d,0)\to(\R^d,0)$ be the germ of an invertible $C^1$-smooth self-map and the origin $0$ is an isolated fixed point for all iterations $F^n:(\R^d,0)\to(\R^d,0)$.  Then the multiplicity of $F^n$ at the origin, defined as the topological index of the vector field $v_n(x)=F^n(x)-x$, is uniformly bounded as $n\to\infty$, as was discovered by M.~Shub and D.~Sullivan \cite{sul-shub}.

V.~Arnold in \cite{a-miln} generalized this result for the local complexity of holomorphic iterations in the general setting. Let $X,Y\subseteq(\C^d,0)$ be two germs of analytic submanifolds of complimentary dimensions. Since $F(0)=0$, the origin is always a common point of the iterates $F^n(X)$ and $Y$ for any $n$  and one may compute the \emph{intersection multiplicity} $\mu_n=\mu_n(F,X,Y)>0$ between $F^n(X)$ and $Y$ (the precise definition is given below at \secref{sec:mult}). If the intersection $F^n(X)\cap Y$ consists of the single (isolated) point at the origin, the corresponding multiplicity is a well defined \emph{finite} natural number, $0<\mu_n=\mu_n(F,X,Y)<+\infty$.

Arnold's theorem \cite{a-miln}*{Theorem~1} claims that if $F:(\C^d,0)\to(\C^d,0)$ is the germ of a holomorphic \emph{invertible} self-map and $X,Y\subset(\C^d,0)$ a pair of analytic subvarieties of complimentary dimensions, such that the origin is an isolated intersection of $F^n(X)$ and $Y$ for all $n=1,2,\dots$, then the multiplicity of this intersection $\mu_n(F,X,Y)$ stays bounded as $n\to\infty$.

The assumption of invertibility is crucial: the germ $F:(\C^2,0)\to(\C^2,0)$, $F(x,y)=(x,y^2)$, after $n$ iterations maps the diagonal $X=\{y=x\}$ into the curve $F^n(X)=\{y=x^{2^n}\}$ which intersects the axis $Y=\{y=0\}$ with the multiplicity $2^n$ which grows exponentially as $n\to\infty$ \cite{a-miln}. Very recently W. Gignac constructed in \cite{gignac} an example of a non-invertible (strongly contracting) \emph{polynomial} self-map, for which the multiplicity of contact between two analytic curves $F^n(X)$ and $Y$ grows faster than any prescribed sequence.

In contrast with rather transparent arguments of Shub and Sullivan, the proof by Arnold rests heavily on the Skolem--Mahler--Lech theorem, a very deep fact from the number theory \cite{skolem}, which gives an explicit description of integer roots of quasipolynomials in one variable (see below for the precise definition; usually it is formulated for zero terms in recurrent sequences).

\subsection{Nonisolated intersections, flows and other general group actions}
There is a number of natural questions which are motivated by the above results of Shub--Sullivan and Arnold.

Both  theorems fail to address the case where some of the intersections $F^n(X)\cap Y$ are non-isolated (resp., where some of the iterates $F^n$ have a non-isolated fixed point at the origin). This seemingly technical restriction is known to cause serious problems in certain situations, cf.~with \cite{bn-11} and references therein. In this case the multiplicity of intersection becomes infinite, $\mu_n=\infty$ for some values of $n$, but one may still ask whether the sequence of \emph{finite} intersection multiplicities remains bounded.

Besides, from the dynamical point of view it is natural in parallel with the discrete time dynamical system of iterates $F^n$, $n\in \N$, to consider also the continuous time one-parametric groups $\{F^t\}$ of holomorphic germs with real ($t\in \R$) or complex ($t\in\C$) time. Such groups arise as collections of the flow maps (in short, the \emph{flow} of holomorphic vector fields on $(\C^d,0)$ with a singular point at the origin. By definition of the flow, it satisfies the ordinary differential equation $\left.\frac {\mathrm d}{\mathrm dt}F^t\right|_{t=0}=v$, where $v$ is the germ of the vector field, called the \emph{infinitesimal generator of the group} $\{F^t\}$.

Given the one-parametric group and any two germs of analytic manifolds $X,Y\subseteq(\C^n,0)$ of complementary dimensions, one can define the (extended)-integer-valued function $\mu_t=\mu_t(v,X,Y)$ as the intersection multiplicity between the image $F^t(X)$ and $Y$ (as before, we assume that $\mu_t=+\infty$ if the intersection is non-isolated). One could expect that the \emph{finite} values of this function should remain bounded. Note that in this case the nondegeneracy assumption follows automatically from the reversibility of the flow. On the other hand, unlike in the discrete time case, there is no Skolem theorem for the real or complex roots of quasipolynomials, thus the study should be based on different ideas.

Next, one can easily reformulate the question in different regularity assumptions, e.g., for $C^\infty$-smooth maps (resp., flows), or for formal self-maps (formal flows). In all cases the notion of multiplicity of intersection (eventually, infinite) is well defined, making the questions meaningful.

One can also reformulate the question for ``generalized dynamical systems''. Instead of just one invertible map, which induces a holomorphic action of the group $\Z$ (or one flow which induces a holomorphic action of the groups $\R$ or $\C$) we may ask the question about actions of more general groups. For instance, one can consider several holomorphic invertible self-maps $F_i:(\C^d,0)\to(\C^d,0)$, $i=1,\dots,p$ and the group $G$ generated by them using compositions and inversions. Then for any element $g\in G$ one can consider the image $g(X)$ and its intersection multiplicity $\mu(g)=\mu(g^{-1}(X),Y)$. It is interesting to study the distribution of finite values of the function $\mu=\mu_{XY}:G\to\N^*=\N\cup\{+\infty\}$, $g\mapsto\mu(g)$. The answer obviously depends on the \emph{algebraic} nature of the group $G$.

Very often instead of just one local self-map (flow, finitely generated group) one has to deal with \emph{parametric families} of such objects, depending on additional parameters in a way that is regular enough (smooth, analytic, etc.). In this case the main question is how the intersection multiplicity bounds depend on these parameters, in particular, whether they are they locally uniformly bounded. If only finite multiplicity is allowed (as in the initial version of Arnold's theorem), then such uniformity would follow from simple semicontinuity arguments, however, in presence of nonisolated intersections the affirmative answer is not automatic anymore.

We were able to answer all these questions in the affirmative sense for the widest class of formal groups, assuming only the commutativity of the group $G$, provided that it is finitely generated.

\subsection{Definitions and terminology}
We shall formulate and prove the main results for actions of commutative groups on the ring of formal series. All other (analytic, smooth etc.) versions will follow as easy corollaries. For the sake of simplicity, we deal first with the non-parametric case; the parametric case requires some extra technical work which will be outlined in the Appendix below.

Recall some basic terminology and facts. Denote by $\C[[x]]=\C[[x_1,\dots,x_d]]$ the algebra of formal Taylor series in $d$ variables with complex coefficients. This is a \emph{local algebra} with the unique maximal ideal of germs without the free term, denoted by $\mathfrak m$. A \emph{formal self-map} $F$ is a $d$-tuple of formal series without the free term; under this assumption one can define the composition operator $F^*:\C[[x]]\to\C[[x]]$,  $f\mapsto f\circ F$ which is an algebra homomorphism (in particular, it is $\C$-linear). By definition, $F^*\mathfrak m=\mathfrak m$. This homomorphism is an isomorphism if $F$ is formally invertible (the matrix of linear terms has nonzero determinant). Invertible formal self-maps can be composed with each other, forming the group which we will denote by $\Diff[[\C^d,0]]=\Aut\C[[x]]$. In a similar way a formal vector field $v$ is a $d$-tuple of formal series $v=(v_1,\dots,v_d)$ without the free terms, $v_i\in\mathfrak m\subset\C[[x]]$, which acts on the algebra $\C[[x]]$ as a derivation, $V:\C[[x]]\to\C[[x]]$, $V\!f=\sum_{j=1}^d\frac{\partial f}{\partial x_j}\cdot v_j(x)$. The \emph{formal flow} of a formal vector field is a one-parametric subgroup $\{F^t:t\in\C\}\subset\Diff[[\C^d,0]]$ which satisfies the differential equation $\left.\frac{\mathrm d}{\mathrm dt}F^t\right|_{t=0}=v$. One can show that such a subgroup always exists and the corresponding automorphism is given by the formal exponential series:
\begin{equation}\label{exp}
 {F^t}^*=\mathrm e^{tV}=\operatorname{id}+\,tV+\tfrac1{2!}\,t^2V^2+\cdots+\tfrac1{k!}\,t^k V^k+\cdots
\end{equation}
The series in right hand side converges termwise to an automorphism of $\C[[x]]$ for all complex values of $t\in\C$, associated with the corresponding one-parametric subgroup of formal self-maps $\{F^t\}$ \cite{thebook}. The corresponding formal self-maps will be denoted by $F^t=\mathrm e^{tv}$. 

All these constructions remain valid if we replace the algebra $\C[[x_1,\dots,x_d]]$ by the algebra $\mathscr O(\C^d,0)$ of holomorphic germs or the algebra $\mathscr E(\R^d,0)$ of germs of $C^\infty$-smooth real functions, and are naturally interrelated by associating with each germ (holomorphic or smooth) its formal Taylor series.

A \emph{formal subvariety} $X$ will be identified with a radical ideal $I=I_X\subset\C[[x]]$ in the same way the germ of an analytic subvariety $X$ can be identified with the (radical) ideal $I_X\subseteq\O$ of holomorphic germs vanishing on it. Since both rings are Noetherian, each ideal $I\subseteq\C[[x]]$ or $I\subseteq\O$ is always finitely generated: $I=\left<f_1,\dots,f_s\right>$. The algebra of $C^\infty$-smooth germs is not Noetherian, but we will only consider \emph{finitely generated} ideals in $\mathscr E(\R^d,0)$. The (germ of the) zero locus $X=V(I)$ of a finitely generated ideal $I$ is called a \emph{finitely presented local subvariety} in $(\R^d,0)$. For a formal self-map $F$ and a formal variety $X$ associated with an ideal $I$, the formal variety associated with the ideal $F^*I$ is called the preimage $F^{-1}(X)$ (in full agreement with the local analytic case, where the system of equations $\{(f_i\circ F)(x)=0\}$ means that $F(x)=y\in X=\{f_i(x)=0\}$).

For any two (formal, analytic or finitely presented) subvarieties $X,Y$ we define the multiplicity of their intersection as the codimension of the ideal $\left<I_X,I_Y\right>$ generated jointly by $I_X$ and $I_Y$:
\begin{equation*}
 \forall\ X,Y \quad \mu(X,Y)=\codim_\C\left<I_X,I_Y\right>\in\N^*=\N\cup\{+\infty\}.
\end{equation*}
This codimension may be a finite natural number or infinity. In an obvious way this definition can be generalized for the multiplicity of a multiple intersection $X_1,\dots,X_s$ of several (finitely presented in the $C^\infty$-smooth case) varieties.

A subgroup $G\subset\Diff[[\C^d,0]]$ is \emph{finitely generated}, if there exist finitely many invertible formal self-maps $F_1,\dots,F_p\in\Diff[[\C^d,0]]$ and formal vector fields $v_1,\dots,v_q$ such that every element of $G$ can be represented as a finite composition of the maps $F_i^{\pm1}$ and flows $\mathrm e^{t_j v_k}$. We will be mainly interested in the commutative case, where such composition can be described by a $r$-tuple of numbers, $r=p+q$, part of them integer, the rest complex (the cases $p=0$ or $q=0$ are not excluded).

A finitely generated group $G\subset\Diff[[\C^d,0]]$ is \emph{commutative} if and only if the generators commute between themselves, that is,
\begin{enumerate}
 \item $F_i\circ F_j=F_j\circ F_i$ for all $i,j=1,\dots,p$,
 \item $[v_i,v_j]=0$ for all $i,j=1,\dots,q$ (the formal Lie bracket),
 \item Each map $F_i$ preserves each of the vector fields $v_j$, $(F_i)_*v_j=v_j$ for all $i=1,\dots,p$, $j=1,\dots,q$.
\end{enumerate}
Each element of a commutative finitely generated group $G$ can be represented as
\begin{gather}
 g=F_1^{t_1}\circ \cdots \circ F_p^{t_p}\circ \mathrm e^{t_{p+1} v_1}\circ\cdots\circ \mathrm e^{t_{p+q}v_q},\label{t}
 \\
 \qquad t_1,\dots,t_p\in\Z,\ t_{p+1},\dots,t_{p+q}\in\C.\label{zc}
\end{gather}
In the group theoretic sense every commutative finitely generated subgroup of resp., $\Diff[[\C^d,0]]$ is isomorphic to a quotient of the ``free'' group $\Z^p\times\C^q$ by any identities that may occur (e.g., one of the generators $F_i$ may be periodic, $F^s=\operatorname{id}$ for a finite value of $s\in\N$). Conversely, the image of any group homomorphism (representation) $G:\Z^p\times\C^q\to\Diff[[\C^d,0]]\simeq\Aut\C[[x]]$ is an Abelian finitely generated subgroup of self-maps. We will identify the group $G$ with its image by the map (\ref{t},\ref{zc}) and write $g=G(t)$ for a generic element of $G$.

For any given pair of formal subvarieties $X,Y$ the intersection multiplicity function $\mu_{XY}(g)=\mu(g^{-1}(X),Y)$ pulls back as the function
\begin{equation}\label{mut}
 G^*\mu_{X,Y}:\Z^p\times\C^q\to\N^*,\qquad t\mapsto\mu\bigl(G(t)^{-1}(X),Y\bigr).
\end{equation}
%
%The linear terms of each self-map $F_i$, resp., vector field $V_j$, form a square $(d\times d)$-matrix over $\C$, whose eigenvalues form a subset in $\C$, called the \emph{spectrum} of the corresponding generator.

%\begin{Rem}\label{rem:embed}
%Any cyclic subgroup $G=\{F^t:t\in\Z\}\subset\Diff[[C^d,0]]$ of formal self-maps can be embedded in a formal flow $G^\C=\{F^t:t\in\C\}\subset\Diff[[\C^d,0]]$. For germs $F(x)=Ax+\cdots$ with the unipotent linearization part $A=\operatorname{id}+N$, $N$ a nilpotent $(d\times d)$-matrix, this is proved in \cite{thebook}*{Theorem 3.14, \parasymbol 3D}. The general case is obvious for formal self-maps in  Poincar\'e--Dulac normal form \cite{thebook}*{\parasymbol 4}. Such germs have the form $F(x)=D\circ F_\bullet$ with a diagonal linear self-map $D$ commuting with a formal unipotent self-map $F_\bullet$, and the flow has the form $F^t=\mathrm e^{tL}\circ F_\bullet^t$, where $L$ is the matrix logarithm of $A$ \cite{thebook}*{Lemma~3.11}.
%
%One can easily verify that for a commutative group with several ``discrete'' generators the corresponding flows can be constructed to be commuting again.
%
%The embeddability assertion (even for a single generator) is false (albeit for different reasons) for subgroups of $\Diff(\C^d,0)$ or $\Diff(\R^d,0)$. In the first case the obstruction is related to the so called Ecalle--Voronin modulus \cite{thebook}*{Chapter IV}, in the second,---with the impossibility of embedding orientation-reverting smooth self-maps into a real flow.
%\end{Rem}

\subsection{Principal results}

\begin{Thm}\label{thm:main}
Let $G\subset\Diff[[\C^d,0]]$ be a commutative finitely generated subgroup of formal self-maps.

Then for any two formal subvarieties $X,Y$ the multiplicity intersection function $\mu=\mu_{XY}:G\to\N^*$, $\mu(g)=\mu(g^{-1}(X),Y)\le +\infty$, has bounded finite values\textup: there exists a finite constant $m\in\N$ depending on $G$ and the two varieties $X,Y$, such that
\begin{equation}\label{bdm}
 \forall g\in G\ \mu(g)<+\infty\implies \mu(g)\le m.
\end{equation}
\end{Thm}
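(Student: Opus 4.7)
The plan is to convert the sup-over-$G$ question into a finiteness statement for a universal family of ideals over a Noetherian ring, and then combine the Noetherianity of $\C[[x]]$ on the $G$-orbit with a local Weierstrass argument in the continuous parameters.

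Concretely, parametrize $g = G(n,\tau)$ with $n\in\Z^p$ and $\tau\in\C^q$ as in (\ref{t},\ref{zc}), pick generators $f_1,\dots,f_s$ of $I_X$ and $h_1,\dots,h_r$ of $I_Y$, and form the universal pullback $\tilde f_i^{(n)}(x,\tau) := G(n,\tau)^*(f_i) \in R := \C[[x,\tau]]$, which is a termwise-convergent power series in $\tau$ by (\ref{exp}). The quotient $B_n := R/(\tilde f_1^{(n)},\dots,\tilde f_s^{(n)},h_1,\dots,h_r)$ is a module over $\C[[\tau]]$. Whenever $\mu(G(n,0))<\infty$, the fiber $B_n/\tau B_n$ has $\C$-dimension equal to $\mu(G(n,0))$, so by a Weierstrass-type preparation argument $B_n$ becomes a finite $\C[[\tau]]$-module with generator count at most $\mu(G(n,0))$. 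Commutativity gives the identity $G(n,\tau_0+\sigma) = G(n,\tau_0)\circ G(0,\sigma)$, so the same construction re-centered at any $\tau_0\in\C^q$ yields the upper-semicontinuous local estimate $\mu(G(n,\tau_0+\sigma)) \le \mu(G(n,\tau_0))$ for $\sigma$ in a formal neighborhood of the origin.

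The second, decisive step upgrades these local estimates to a single global bound. Here Noetherianity of $\C[[x]]$ enters through the orbit ideal $\mathcal I_X := \sum_{g\in G} g^*(I_X)$, which is $G$-invariant (by commutativity) and finitely generated, hence already a finite sum $\mathcal I_X = \sum_{i=1}^M g_i^*(I_X)$ for some $g_1,\dots,g_M\in G$. Combined with the analogous ACC argument on the discrete $F_k$-orbits inside $\C[[x]]$, this should reduce the sup-over-$G$ question to a sup over finitely many of the Weierstrass families constructed above.

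The main obstacle is precisely this final gluing. Weierstrass preparation delivers bounds only in formal neighborhoods of individual parameter values, whereas the parameter space $\Z^p\times\C^q$ is non-compact and an upper-semicontinuous integer function can in principle spike arbitrarily high on a proper analytic subset. The algebraic finiteness of the orbit ideal $\mathcal I_X$ therefore has to take the place of a geometric covering argument, and the commutativity hypothesis — without which, as the abstract notes, counterexamples exist — ought to enter essentially at this step, perhaps by controlling the jumps of $\mu$ on the finite-dimensional quotients $\C[[x]]/\mathfrak m^N$ on which all commuting automorphisms and their infinitesimal generators act.
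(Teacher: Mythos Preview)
Your proposal is not a proof; as you yourself say, the ``final gluing'' is missing, and in fact it cannot be supplied along the lines you suggest. The orbit ideal $\mathcal I_X=\sum_{g\in G}g^*(I_X)$ is $G$-invariant for \emph{any} subgroup $G$, commutative or not: $h^*\mathcal I_X=\sum_g (g\circ h)^*I_X=\mathcal I_X$ by reindexing. Hence the fact that $\mathcal I_X$ is finitely generated uses only Noetherianity of $\C[[x]]$ and says nothing about commutativity; but the theorem is false for non-commutative $G$ (Example~\ref{ex:nonsolv}), so this step cannot carry the weight you put on it. More concretely, knowing $\mathcal I_X=\sum_{i=1}^M g_i^*(I_X)$ gives no control on the codimension of an \emph{individual} ideal $\langle g^*(I_X),I_Y\rangle$. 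Likewise your Weierstrass step produces only an upper-semicontinuity bound in a formal neighbourhood of each $\tau_0$ with $\mu(G(n,\tau_0))<\infty$; at points where $\mu=\infty$ it yields nothing, and there is no compactness on $\Z^p\times\C^q$ to convert local bounds into a global one.

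The paper's argument is organized around a different Noetherian ring: not $\C[[x]]$, but a ring of functions on the \emph{parameter space} $\Z^p\times\C^q$. Commutativity is used to show (Lemma~\ref{lem:quasip}) that every Taylor coefficient of $G(t)^*f_i$ is a quasipolynomial in $t$ with spectrum in a single finitely generated lattice $\L\subset\C^r$ determined by the linearizations of the generators; this is where the simultaneous triangular form of the commuting actions on the finite-dimensional truncations $\C_m[x]$ enters. Since the condition $\codim I\ge m$ is a finite set of polynomial equations in finitely many Taylor coefficients (Lemma~\ref{lem:mu-alg}), substituting the quasipolynomial dependence on $t$ produces an ascending chain of ideals $\mathscr I_m$ in the Noetherian ring $\C[\mathrm e^{t\L},t]$ (Lemma~\ref{lem:noether}). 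Stabilization of this chain at some $m$ means that $\mu(t)\ge m$ forces $\mu(t)\ge m+k$ for all $k$, i.e.\ $\mu(t)=\infty$, which is exactly the bound \eqref{bdm}. The point you are missing is that Noetherianity must be invoked on the $t$-side, not on the $x$-side.
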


\begin{Cor}
For a commutative finitely generated subgroup $G\subset\Diff(\C^d,0)$ of germs of holomorphic self-maps and any two germs of analytic subvarieties $X,Y\subset(\C^d,0)$ the multiplicity of all \emph{isolated} intersections between $g^{-1}(X)$ and $Y$ is uniformly bounded.
\end{Cor}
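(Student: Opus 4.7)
The plan is to reduce the corollary to Theorem~\ref{thm:main} by passing from holomorphic germs to their formal Taylor expansions. The natural injection $\O\hookrightarrow\C[[x]]$ is a homomorphism of local $\C$-algebras preserving the maximal ideal $\mathfrak m$, and it induces a group homomorphism $\Diff(\C^d,0)\to\Diff[[\C^d,0]]$ as well as an analogous map on holomorphic vector fields vanishing at $0$. By termwise differentiation this map intertwines the true holomorphic flow $\mathrm e^{tv}$ of a vector field $v$ with the formal exponential (\ref{exp}). Consequently, if $G\subset\Diff(\C^d,0)$ is commutative and finitely generated by holomorphic self-maps $F_i$ and flows $\mathrm e^{t v_j}$, its image $\^G\subset\Diff[[\C^d,0]]$ is a commutative finitely generated formal subgroup to which the main theorem applies.

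Next I would associate with the analytic subvarieties $X,Y\subset(\C^d,0)$, defined by ideals $I_X,I_Y\subseteq\O$, the formal subvarieties $\^X,\^Y$ cut out by the extended ideals $I_X\cdot\C[[x]]$ and $I_Y\cdot\C[[x]]$. Because Taylor expansion commutes with substitution, $\widehat{f\circ g}=\^f\circ\^g$, so for every $g\in G$ the ideal $\langle I_{g^{-1}(X)},I_Y\rangle\subseteq\O$ extends in $\C[[x]]$ to precisely $\langle I_{\^g^{-1}(\^X)},I_{\^Y}\rangle$.

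The crucial step is that the intersection multiplicity is preserved under this extension \emph{whenever it is finite}. Suppose $J=\langle I_{g^{-1}(X)},I_Y\rangle\subseteq\O$ has finite codimension $\mu<+\infty$; then $\O/J$ is Artinian and its maximal ideal is nilpotent, so $J\supseteq\mathfrak m^N$ for some $N$. Since the truncation identifies $\O/\mathfrak m^N=\C[[x]]/\mathfrak m^N\C[[x]]$, we obtain $\C[[x]]/J\cdot\C[[x]]\cong\O/J$, whence $\codim_\C J\cdot\C[[x]]=\mu$. Thus every \emph{isolated} intersection of $g^{-1}(X)$ and $Y$ contributes the same multiplicity in the analytic and in the formal local ring.

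Combining these observations, Theorem~\ref{thm:main} applied to $\^G$ with varieties $\^X,\^Y$ produces a uniform bound $m$ on all finite values of $\mu(\^g^{-1}(\^X),\^Y)$, and by the identification above the same $m$ bounds every finite $\mu(g^{-1}(X),Y)$ for $g\in G$. I do not anticipate any serious obstacle: the argument is essentially a dictionary translation from the analytic to the formal setting, and the only mild verification required is the compatibility of the holomorphic flow with the formal exponential series, which is classical.
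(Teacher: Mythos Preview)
Your proposal is correct and takes essentially the same approach as the paper. The paper's own proof is a single sentence---that the intersection of two holomorphic germs is isolated precisely when its multiplicity is finite---which implicitly relies on the analytic-to-formal compatibility you have carefully spelled out (the Artinian argument showing $\O/J\cong\C[[x]]/J\C[[x]]$ whenever $\codim_\C J<\infty$); so your write-up is simply a more detailed version of what the paper leaves tacit.
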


\begin{proof}
The intersection of two holomorphic germs is (complex) isolated if and only if the intersection multiplicity is infinite.
\end{proof}

\begin{Cor}
The assertion of Theorem~\ref{thm:main} remains true for any finitely generated subgroup $G\subset\Diff(\R^d,0)$ of germs of $C^\infty$-smooth self-maps and any pair of finitely presented $C^\infty$-smooth subvarieties $X,Y\subseteq(\R^d,0)$.
\end{Cor}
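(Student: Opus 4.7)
\medskip
\noindent\textbf{Proof plan.}
The natural strategy is to reduce this smooth statement to the formal Theorem~\ref{thm:main} via the Taylor-at-the-origin homomorphism $T\:\mathscr E(\R^d,0)\to\R[[x]]\hookrightarrow\C[[x]]$. Because $T$ intertwines composition of self-maps, formal inversion of invertible self-maps, the Lie bracket of vector fields, and the pushforward of a vector field by a self-map---all of these being given by the same universal formulas on the level of coefficients---the Taylor series of the generators $F_1,\dots,F_p$ and $v_1,\dots,v_q$ of $G$ produce generators of a commutative finitely generated subgroup $\^G\subset\Diff[[\C^d,0]]$, the three commutation conditions of $G$ transferring verbatim. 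Every element $g\in G$ written in the form \eqref{t} (with the real flow parameters viewed as complex numbers) has Taylor series $\^g\in\^G$, and the finitely presented smooth ideals $I_X,I_Y$ induce finitely generated formal ideals $\^I_X,\^I_Y\subset\C[[x]]$ generated by $T$ applied to any finite generating sets.

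The main technical ingredient is a transfer lemma for finite intersection multiplicities: for any finitely generated ideal $J\subset\mathscr E(\R^d,0)$ with $\codim_\R\mathscr E/J<+\infty$, one has $\codim_\R\mathscr E/J=\codim_\C\C[[x]]/\^J$, where $\^J\subset\C[[x]]$ is generated by $T$ of a finite generating set of $J$. Indeed, a finite real codimension makes $\mathscr E/J$ a local Artinian $\R$-algebra, whose maximal ideal $\mathfrak m/J$ is therefore nilpotent, so that $\mathfrak m^N\subseteq J$ for some $N$. Borel's lemma then shows that every formal series in $\mathfrak m^N\subset\R[[x]]$ lifts to a smooth germ in $\mathfrak m^N\subseteq J$, and hence lies in the real formal ideal $\^J_\R\subset\R[[x]]$ generated by the Taylor series of any generating set; combined with the standard truncation identification $\mathscr E/\mathfrak m^N\cong\R[[x]]/\mathfrak m^N$, this yields $\mathscr E/J\cong\R[[x]]/\^J_\R$, and extension of scalars to $\C$ preserves dimension.

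With the lemma in hand the corollary is immediate: apply Theorem~\ref{thm:main} to $\^G$ together with the ideals $\^I_X,\^I_Y$ to obtain a universal bound $m$, and for every $g\in G$ with $\mu(g)<+\infty$ set $J=\left<g^*I_X,I_Y\right>$, so that by the transfer lemma and the theorem
\begin{equation*}
 \mu(g)=\codim_\C\C[[x]]/\left<\^g^*\^I_X,\^I_Y\right>\le m.
\end{equation*}
The principal (and essentially only nontrivial) obstacle is the transfer lemma itself, because $\mathscr E(\R^d,0)$ is non-Noetherian and a naive Nakayama argument does not apply; the point is that finite codimension of $J$ already forces a Noetherian truncation through the nilpotency of the maximal ideal of $\mathscr E/J$, after which Borel's surjectivity identifies the smooth and formal quotients on the nose.
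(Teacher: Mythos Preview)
Your proof is correct and follows exactly the paper's approach: reduce to the formal Theorem~\ref{thm:main} by taking Taylor series of the generators of $G$ and of the defining equations of $X,Y$. The paper's own proof is a single sentence (``By taking the Taylor series \dots\ one reduces the situation to the purely formal case''), whereas you carefully spell out the transfer lemma identifying the smooth and formal codimensions when the former is finite---a point the paper simply takes for granted.
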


\begin{proof}
By taking the Taylor series (eventually diverging) of the generators of the group and the equations defining the subvarieties $X,Y$, one reduces the situation to the purely formal case.

Note that the assertion of the theorem is much less informative in the $C^\infty$-smooth category. For instance, any germ of a closed set can be defined by a single $C^\infty$-smooth equation $f=0$, and hence very pathological intersection patterns can be designed. However, such an equation is (in the really bad cases) necessarily flat (having identically zero Taylor series), so all intersection multiplicities will be identically infinite for all elements of the group.
\end{proof}

The parametric case of commutative groups generated by self-maps and fields analytically depending on finitely many parameters, is discussed in the Appendix.

\subsection{Acknowledgments}
The first author gratefully acknowledges the support of the Kupcinet-Getz Internetional Summer Science School at the Weizmann Institute, during which the main part of the work was done. The second author is supported by the Israel Science Foundation grant 493/09. 
 
We thank all our friends and colleagues who discussed with us the work while it was in progress.  Our special gratitude goes to  Vladimir Berkovich, Gal Binyamini, Dmitry Novikov and Dmitry Gourevitch for their very insightful remarks.

\section{Intersection multiplicity and its algebraic computability}\label{sec:mult}
In this section we recall (and give a complete proof for readers' convenience) of the well known (folklore) fact that the codimension of a finitely generated ideal in $\C[[x]]$ is an ``algebraically computable'' function of its generators. This fact is one of the ingredients of Arnold's original proof \cite{a-miln}*{Lemma 1}.

%For any finite $m\in\N$ the quotient $\C[[x]]/\mathfrak m^{\ell+1}$, which is a finite dimensional linear algebra over $\C$, called the algebra of truncated polynomials. The natural projection is called truncation. Since automorphisms from $\Diff[[\C^d,0]]$ preserve $\mathfrak m$, they induce \emph{truncated action} on the quotient for any finite $\ell$.

\subsection{Algebraic decidability of the multiplicity}
Let $n\in\N$ be a fixed natural number. Consider all ideals in $\C[[x]]$ generated by $n$ elements $f_1,\dots,f_n$. Expanding each generator as a formal series,
\begin{equation*}
 f_i(x)=\sum_{i,\alpha}a_{i\alpha}x^\alpha, \qquad x=(x_1,\dots,x_d),\ \alpha\in\mathbb Z_+^d,\ a_{i\alpha}\in\C,
\end{equation*}
we see that the (infinitely many) Taylor coefficients $\{a_{i\alpha}\in\C:i=1,\dots, n,\ \alpha\in\Z^d_+\}$ form ``coordinates'' on the set of all ideals with $\le n$ generators. We will show that for any natural $m$ the condition that the ideal $I=\left<f_1,\dots,f_n\right>$ has codimension $\ge m$ is equivalent to a finite number of polynomial identities involving the Taylor coeffiients $a_{i\alpha}$.

\begin{Ex}
The condition that the codimension of $I$ is positive, is equivalent to the algebraic conditions $a_{i0}=0$, $i=1,\dots,n$ (no free terms). The condition that $\codim I\ge 2$ also can be expressed as the algebraic rank condition on the Jacobian matrix $J$ of the first order coefficients $a_{i\alpha}$ with $|\alpha|=1$ (vanishing of all minors of size $d$). In particular, when $n=d$, we have a single polynomial condition of vanishing the determinant, $$\operatorname{codim}I\ge 1\iff \det J=0,\qquad J=\|a_{i\alpha}\|_{i=1,\dots,n,\ |\alpha|=1}.$$
\end{Ex}

\begin{Lem}\label{lem:mu-alg}
For any finite $m$ the condition that $\codim I\ge m$ is equivalent to a finite number of algebraic conditions imposed on the Taylor coefficients $\{a_{i\alpha}\}$ of orders not exceeding $m$.
\end{Lem}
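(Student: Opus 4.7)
My plan is to replace the a priori infinite-dimensional condition $\codim I\ge m$ by a finite rank condition inside $V_m:=\C[[x]]/\mathfrak m^m$, a vector space of fixed dimension $N_m=\binom{d+m-1}{d}$. The bridge I would prove first is the equivalence
\begin{equation*}
\codim I\ge m \iff \codim(I+\mathfrak m^m)\ge m;
\end{equation*}
the right-hand side visibly depends only on truncated Taylor data of the generators.

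The direction $(\Leftarrow)$ is immediate from the surjection $\C[[x]]/I\twoheadrightarrow\C[[x]]/(I+\mathfrak m^m)$. For $(\Rightarrow)$ I would argue the contrapositive. Assume $k:=\codim(I+\mathfrak m^m)<m$. Then $\C[[x]]/(I+\mathfrak m^m)$ is a local Artinian $\C$-algebra of dimension $k$, so its maximal ideal is nilpotent of index $\le k$, giving $\mathfrak m^k\subseteq I+\mathfrak m^m\subseteq I+\mathfrak m\cdot\mathfrak m^k$ (since $m\ge k+1$). Writing each $f\in\mathfrak m^k$ as $f=g+h$ with $g\in I$ and $h\in\mathfrak m^{k+1}$ forces $g=f-h\in I\cap\mathfrak m^k$, so $\mathfrak m^k=(I\cap\mathfrak m^k)+\mathfrak m\cdot\mathfrak m^k$; Nakayama's lemma applied in the Noetherian local ring $\C[[x]]$ to the finitely generated module $\mathfrak m^k$ then yields $\mathfrak m^k\subseteq I$. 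Hence $I+\mathfrak m^m=I$ and $\codim I=k<m$, as required.

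Granted the equivalence, I would observe that the image $(I+\mathfrak m^m)/\mathfrak m^m\subseteq V_m$ is the $\C$-linear span of the finite family $\{f_i\cdot x^\alpha\bmod\mathfrak m^m:1\le i\le n,\ |\alpha|<m\}$ (products with $|\alpha|\ge m$ lie in $\mathfrak m^m$). In the monomial basis of $V_m$, the coordinate of $f_i x^\alpha$ along $x^\gamma$ (with $|\gamma|<m$) equals the Taylor coefficient $a_{i,\gamma-\alpha}$ of $f_i$, of order $|\gamma-\alpha|\le m-1$ (or $0$ if $\gamma-\alpha$ has a negative entry). The condition $\codim(I+\mathfrak m^m)\ge m$ therefore translates into the rank of this $N_m\times(nN_m)$ coefficient matrix being at most $N_m-m$, i.e., into the simultaneous vanishing of all its $(N_m-m+1)$-minors --- a finite collection of polynomial identities in the Taylor coefficients $a_{i\beta}$ with $|\beta|\le m-1$.

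The main obstacle is the implication $(\Rightarrow)$ of the key equivalence, which rests on the Nakayama/nilpotency input; the translation into a rank condition and the resulting polynomial identities are routine linear algebra bookkeeping.
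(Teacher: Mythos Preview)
Your proof is correct and follows essentially the same route as the paper's: truncate modulo a power of $\mathfrak m$, invoke the fact that small codimension forces $\mathfrak m^k\subseteq I$, and read the codimension as a rank (vanishing-of-minors) condition on the coefficient matrix. The differences are cosmetic---you truncate at $\mathfrak m^m$ rather than $\mathfrak m^{m+1}$ (yielding the marginally sharper bound $|\beta|\le m-1$), and you supply a self-contained Nakayama argument for the inclusion $\mathfrak m^k\subseteq I$ where the paper simply cites \cite{avg-1}.
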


\begin{proof}
Consider the quotient algebra $\C_m[x]=\C[[x]]/\mathfrak m^{m+1}$ of \emph{truncated polynomials} of degree $\le m$, which is finite dimensional over $\C$, and the ideal $I_m=I\bmod \mathfrak m^{m+1}$ which is the quotient image of $I$ in $\C_m[x]$. The ideal $I_m$ is generated by truncation of the series $f_1,\dots,f_n$ to order $m$. The codimension of $I_m$ in $\C_m$ is the codimension of the image of the $\C$-linear map
$$
 (u_1,\dots,u_n)\mapsto u_1f_1+\cdots+u_n f_n,\qquad u_i,f_i\in\C_m[x].
$$
The matrix of this map (in the monomial bases in the source and the target spaces) consists of suitably placed coefficients $a_{i\alpha}$ with $|\alpha|\le m$. The condition that the image has codimension $\ge k$ for any natural $k$, is equivalent to vanishing of all minors of size $k+1$ and larger, which translates into finitely many polynomial conditions on the parameters $a_{i\alpha}$ with rational coefficients which define a radical ideal in the ring $\Q[a_{i\alpha}:|\alpha|\le m]$.

It remains to mention the general fact: if $I$ is an ideal of codimension less than $m$, then the $m$-th power of the maximal ideal $\mathfrak m^m$ automatically belongs to $I$ \cite{avg-1}*{\parasymbol 5.5, Ch.~I}. This means, that truncating the generators $f_i$ to their Taylor polynomials of order $\le m$ cannot affect the ideal, i.e., the conditions characterizing ideals of codimension $<m$ (and hence the complement, the ideals of codimension $\ge m$) cannot involve high order coefficients, being thus the same algebraic conditions as for the ring of the truncated polynomials.
\end{proof}

Using the terminology suggested by Arnold in \cite{arn-local} and further elaborated in \cite{thebook}*{\parasymbol 10}, one can say that computation of the codimension of an ideal in the local ring $\C[[x]]$ is an algebraically decidable problem.

\subsection{Infinite ascending chain of ideals}
Denote by $\mathfrak A=\mathfrak A_{dn}=\Q[\dots, a_{i\alpha},\dots]$ the ring of polynomials in the infinitely many variables $\{a_{i\alpha}:i=1,\dots,n,\ |\alpha|\ge 0\}$ (each element of $\mathfrak A$ may depend only on finitely many variables). The polynomial conditions described in Lemma~\ref{lem:mu-alg} define an increasing chain of radical ideals $\mathfrak I_m$ in this ring, generated by the corresponding polynomials:
\begin{equation}\label{chain}
 \mathfrak I_1\subsetneq\mathfrak I_2\subsetneq\cdots\subsetneq\mathfrak I_m\subsetneq\cdots\subsetneq \mathfrak A.
\end{equation}
The ``universal'' (depending only on the number of generators $n$ and the number of independent variables $d$) ideal $\mathfrak I_m$ is generated by the polynomial conditions guaranteeing that the codimension of the ``general'' ideal $I=\left<\sum a_{i\alpha}x^\alpha:i=1,\dots,n\right>$ is $\ge m$. Since the ring $\mathfrak A$ is \emph{not} Noetherian (the number of ``independent variables'' $a_{i\alpha}$ is infinite), the chain \eqref{chain} does not have to stabilize (and indeed does not, since there are ideals  of arbitrarily high but finite codimension).

\section{Quasipolynomials}
In this section we study how the elements $G(t)$, $t\in\Z^p\times\C^q$ of a commutative subgroup $G\subset\Diff[[\C^d,0]]$ depend on $t$. This result is also of the folklore nature, at least in the one-dimensional case.

\subsection{Quasipolynomials in several variables}
Let $\L\subset\C$ be a subset (eventually infinite) of the complex vector space $\C^r$.

\begin{Def}
A (complex) \emph{quasipolynomial in $r$ variables with the spectrum in $\L$} is a \emph{finite} sum of the form
\begin{equation}\label{qp}
 q(z)=\sum_{\l\in\L}\mathrm e^{\left<\l, z\right>}p_i(z),\qquad p_\l\in\C[z]=\C[z_1,\dots,z_r],\ z\in \C^r,
\end{equation}
where $\left<\lambda,z\right>=\sum_1^r \l_i z_i$.
\end{Def}
If $\L$ is a lattice, that is, $\L+\L\subseteq\L$ (in the Minkowski sense), then all quasipolynomials with the given spectrum $\L$ form a commutative algebra over $\C$. We will denote this algebra by $\C[\mathrm e^{z\L},z]$, $z=(z_1,\dots,z_r)$. 

%The representation \eqref{qp} is unique: the \emph{quasimonomials} $\mathrm e^{\left<\l, z\right>}z^\alpha$ corresponding to the different tuples $(\l,\alpha)\in\C^r\times\Z_+^r$, are linear independent.

By this definition, quasipolynomials are entire functions on $\C^r$. A quasipolynomial on $\Z^p\times\C^q$ is a by definition the restriction of a complex quasipolynomial on $\C^r$, $r=p+q$, on the the subset $\Z^p\times\C^q$. Note that in the discrete case each exponential can be changed by an integer multiple of $2\pi \mathrm i$, thus it is convenient to add $\pm2\pi\mathrm i$ to the generators of the corresponding lattice. 

Quasipolynomials in one variable naturally arise as solutions of linear ordinary differential equations with constant coefficients. The values of a quasipolynomial at all integer points $z\in\Z$ solve a linear recurrence with constant coefficients and vice versa.

\subsection{Noetherianity of the algebra of quasipolynomials}
We prove an obvious sufficient condition on the lattice $\L\subset\C^r$ guaranteeing that the corresponding algebra of quasipolynomials $\C[\mathrm e^{z\L},z]$ is Noetherian, that is, every ascending chain of ideals in this algebra stabilizes after finitely many steps.

\begin{Lem}\label{lem:noether}
If the lattice $\L\subseteq\C^r$ is finitely generated, then the algebra $\C[\mathrm e^{z\L},z]$ is Noetherian.
\end{Lem}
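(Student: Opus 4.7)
The plan is to exhibit $\C[\mathrm e^{z\L},z]$ as the image of a surjective $\C$-algebra homomorphism from a Laurent polynomial ring in finitely many variables, and then invoke the standard fact that any homomorphic image of a Noetherian ring is itself Noetherian.

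First I would fix a finite generating set $\l_1,\dots,\l_s\in\L$ for $\L$ as an additive subgroup of $\C^r$ and put $u_i(z)=\mathrm e^{\left<\l_i,z\right>}$, viewed as entire functions on $\C^r$. Since every $\l\in\L$ has the form $\l=\sum_{i=1}^{s}n_i\l_i$ with $n_i\in\Z$, the single exponential $\mathrm e^{\left<\l,z\right>}$ equals the Laurent monomial $u_1^{n_1}\cdots u_s^{n_s}$. Consequently every element of $\C[\mathrm e^{z\L},z]$ is a $\C$-linear combination of expressions of the form $z^\alpha u_1^{n_1}\cdots u_s^{n_s}$ with $\alpha\in\Z_+^r$ and $n_i\in\Z$, so this algebra is generated over $\C$ by the finite list $z_1,\dots,z_r,u_1^{\pm 1},\dots,u_s^{\pm 1}$.

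Next I would introduce the abstract Laurent polynomial ring
\begin{equation*}
  R=\C[Z_1,\dots,Z_r,U_1^{\pm 1},\dots,U_s^{\pm 1}],
\end{equation*}
obtained from the ordinary polynomial ring $\C[Z_1,\dots,Z_r,U_1,\dots,U_s]$ either by localizing at the multiplicative set generated by $U_1,\dots,U_s$, or equivalently by adjoining auxiliary variables $V_i$ and dividing out the relations $U_iV_i-1$. In either description $R$ is a finitely generated $\C$-algebra, hence Noetherian by the Hilbert basis theorem. The substitution $Z_j\mapsto z_j$, $U_i\mapsto u_i$ defines a $\C$-algebra homomorphism $\Phi\colon R\to\C[\mathrm e^{z\L},z]$ which is surjective by the preceding paragraph, so $\C[\mathrm e^{z\L},z]\simeq R/\ker\Phi$ is a quotient of a Noetherian ring and is therefore itself Noetherian.

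The only mild subtlety is that $\ker\Phi$ need not be zero: whenever the generators $\l_i$ are $\Q$-linearly dependent (or coincide up to integer multiples) there will be nontrivial multiplicative relations among the entire functions $u_i$, such as $u_1^{m_1}\cdots u_s^{m_s}\equiv 1$ for some nonzero $m\in\Z^s$. However, we use only surjectivity of $\Phi$, not injectivity, and any quotient of a Noetherian ring remains Noetherian. Thus there is essentially no obstacle beyond recognizing the right finite generating set of the algebra; the ``hardest'' step is bookkeeping to ensure that negative exponents of the $u_i$ are accommodated, which is why one must work with a Laurent polynomial ring rather than an ordinary one.
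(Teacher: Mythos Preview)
Your argument is correct and follows essentially the same route as the paper: exhibit $\C[\mathrm e^{z\L},z]$ as a surjective image of a finitely generated polynomial-type $\C$-algebra and conclude by Hilbert's basis theorem plus the stability of Noetherianity under quotients. The only cosmetic difference is that the paper takes ``lattice'' to mean a sub-\emph{semigroup} ($\L+\L\subseteq\L$), so it can choose semigroup generators with $\L\subseteq\sum\Z_+\boldsymbol\l_i$ and map from the ordinary polynomial ring $\C[w_1,\dots,w_s,z_1,\dots,z_r]$, whereas you treat $\L$ as a subgroup and therefore pass through the Laurent polynomial ring; either way the source is Noetherian and the surjection does the work.
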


\begin{proof}
Assume that the vectors $\boldsymbol\l_1,\dots,\boldsymbol\l_s\in\C^r$ span the lattice $\L$ so that $\L\subseteq\sum_{i=1}^s\Z_+\boldsymbol\l_i$. Consider the algebra of polynomials $\C[w,z]=\C[w_1,\dots,w_s,z_1,\dots,z_r]$ and the map $\C[w,z]\to\C[\mathrm e^{z\L},z]$ which is obtained by extending the correspondence
$$
 z\mapsto z,\qquad w_i\mapsto \mathrm e^{\left<\boldsymbol\l_i,z\right>}
$$
as a ring homomorphism. By definition, each for each monomial of the form $\mathrm e^{\left<\l,z\right>}z^\alpha$ with $\l=\sum_1^s\beta_i\boldsymbol\l_i\in\C^r$ is the image of the monomial $w^\beta z^\alpha$, so that the above homomorphism is surjective. Since the ring $\C[w,z]$ is Noetherian, by the well known theorem \cite{zar-sam} the ring $\C[\mathrm e^{z\L},z]$ is also Noetherian as its quotient. 
\end{proof}

\subsection{Quasipolynomial actions of commutative finitely generated groups}

\begin{Lem}\label{lem:quasip}
If $G:\Z^p\times\C^q\to\Diff[[\C^d,0]]$ is a finitely generated commutative subgroup, then each Taylor coefficient of any orbit $G(t)^*f$, $f\in\C[[x]]$, is a quasipolynomial in the variables $t=(t_1,\dots,t_r)$ with the spectrum $\L\in\C^r$ which is a finitely generated lattice depending only on the linearizations of generators of the group $G$.
\end{Lem}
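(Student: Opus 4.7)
The plan is to reduce to linear algebra on the finite-dimensional quotient $\C_N[x] = \C[[x]]/\mathfrak m^{N+1}$, apply the Jordan--Chevalley decomposition to the (now commuting) generators acting there, and identify the resulting spectrum with a single $N$-independent finitely generated lattice determined by the linear parts of the generators of $G$. First I would fix $N\in\N$ and use that pullback by any invertible formal self-map preserves the $\mathfrak m$-adic filtration, and that a derivation $V$ coming from a vector field $v\in\mathfrak m$ satisfies $V(\mathfrak m^k)\subseteq\mathfrak m^k$; consequently the $N$-jet of $G(t)^*f$ depends only on the $N$-jets of $f$ and of each generator of $G$, and the action of $G$ descends to linear operators on the finite-dimensional algebra $\C_N[x]$. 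Writing $A_i$ for the operator induced by $F_i^*$ and $B_j$ for the one induced by $V_j$, commutativity of $G$ passes to pairwise commutativity of $\{A_i,B_j\}$, and the formal flow formula gives
$$ G(t)^* = A_1^{t_1}\cdots A_p^{t_p}\,\mathrm e^{t_{p+1}B_1}\cdots\mathrm e^{t_{p+q}B_q}\qquad\text{on }\C_N[x]. $$

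Next I would analyze each commuting factor via Jordan--Chevalley. Writing $A_i=S_iU_i$ with $S_i$ semisimple and $U_i=I+N_i$ unipotent, $A_i^{t_i}=S_i^{t_i}\sum_{k\ge 0}\binom{t_i}{k}N_i^k$ has matrix entries that are one-variable quasipolynomials in $t_i\in\Z$ with exponents $\log\sigma$, $\sigma\in\operatorname{spec}(S_i)$ (the branch choice is irrelevant at integer arguments). For $\mathrm e^{t_{p+j}B_j}$ the Jordan decomposition of $B_j$ gives matrix entries that are one-variable quasipolynomials in $t_{p+j}\in\C$ with exponents in $\operatorname{spec}(B_j)$. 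Since these factors commute and depend on disjoint variables, multiplying them yields, for every matrix entry of $G(t)^*$ (and hence for every Taylor coefficient of $G(t)^*f$), an $r$-variable quasipolynomial in $t$.

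To bound the spectrum uniformly in $N$, I would pass to the associated graded algebra $\bigoplus_{k\le N}\mathfrak m^k/\mathfrak m^{k+1}$. Since $A_i$ preserves the filtration, its spectrum on $\C_N[x]$ agrees with that on the associated graded, where on the $k$-th piece it acts as the $k$-th symmetric power of its action on $\mathfrak m/\mathfrak m^2$; the latter shares the spectrum $\{\mu_{i,1},\dots,\mu_{i,d}\}$ of the linear part of $F_i$, so the eigenvalues of $A_i$ on $\C_N[x]$ are the monomials $\prod_l\mu_{i,l}^{\alpha_l}$ with $|\alpha|\le N$. Analogously, the eigenvalues of $B_j$ on $\C_N[x]$ are the integer linear combinations $\sum_l\alpha_l\nu_{j,l}$ of the eigenvalues $\nu_{j,l}$ of the linearization of $v_j$. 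The full $r$-variable spectrum therefore lies in the lattice $\L\subset\C^r$ generated by the vectors $(\log\mu_{i,l})\,\mathbf e_i$ for $i\le p$, $\nu_{j,l}\,\mathbf e_{p+j}$ for $j\le q$, and $2\pi\i\,\mathbf e_i$ for $i\le p$ (the latter absorbing the branch ambiguity of $\log$ at integer $t_i$). This $\L$ is finitely generated and depends only on the linearizations of the generators of $G$, as required.

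The step I expect to require the most care is this last uniformity claim: the spectra of the $A_i$ on $\C_N[x]$ grow with $N$, and one must check that after taking logarithms they all land in a single finitely generated lattice independent of $N$. The symmetric-power description of the graded action supplies the rigidity that prevents new exponents from appearing as $N$ grows.
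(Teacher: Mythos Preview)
Your proposal is correct and follows essentially the same route as the paper: truncate to the finite-dimensional quotient $\C_N[x]$, use the Jordan decomposition of each generator to see that powers and exponentials have quasipolynomial matrix entries, and then identify the spectrum with a lattice determined by the eigenvalues of the linearizations. The only packaging difference is in the uniformity step you flagged: the paper obtains it by choosing coordinates that make the linear parts lower-triangular and reading off the diagonal of the resulting \texttt{deg-lex}-triangular operator (equations \eqref{deglex}--\eqref{triang}), whereas you reach the same conclusion by passing to the associated graded and invoking symmetric powers---these are two phrasings of the same block-triangularity argument.
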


Arranging the monomials $x^\alpha\in\C[[x]]$ in any special order, say, \texttt{deg-lex} (see below), we can identify each element of the group $G(t)$ with a bi-infinite matrix whose entries (matrix elements) depend on $t$. The lemma asserts that all matrix elements of $G(t)$ depend on $t$ quasipolynomially.

\begin{Rem}\label{rem:lattice-desc}
The proof of the Lemma gives an explicit description of the spectrum $\L$ as follows. For each $i=1,\dots,r=p+q$ we define the subset $\L_i\subset\C$ (the ``partial lattice'' associated with the variable $z_i$) as follows:
\begin{enumerate}
 \item if $i\le p$ and the corresponding self-map $F_i$ has the form $F_i(x)=M_i x+\cdots$ (the dots stand for the higher order terms and $M_i$ as an invertible matrix with the eigenvalues $\mu_1,\dots,\mu_d$), then $\L_i$ is the lattice in $\C$ generated by $2\pi\mathrm i$ and the logarithms $\l_i=\ln \mu_i$;
 \item if $p<i<q$ and the corresponding vector field $v_i$ has the form $v_i(x)=H_ix+\cdots$, then $\L_i$ is the lattice generated by the eigenvalues $\l_1,\dots,\l_d$ of the linearization matrix $H_i$.
\end{enumerate}
Then the spectrum $\L\subseteq\C^r$ of the quasipolynomial algebra associated with the group generated as in Lemma~\ref{lem:quasip}, is the Cartesian product 
\begin{equation}\label{Lambda}
 \L=\L_1\times\cdots\times\L_r.
\end{equation}
\end{Rem}

A special case of this lemma for $t\in\Z^1$ is proved in \cite{a-miln}. We give a different proof which adapts easily to the case of flows and general commutative groups.

\begin{proof}[Proof of the Lemma]
Since the assertion concerns only finite order Taylor coefficients, one can replace the algebra $\C[[x]]$ by that of truncated polynomials of sufficiently high degree: the matrix element of the algebra automorphism $G(t)^*$ at the $(\alpha,\beta)$-position, which is coefficient before $x^\beta$ in the formal series $G(t)^* x^\alpha$, can be determined by looking at the truncated action of $G(t)^*$ on any $\C_m[x]$ with $m\ge\max(|\alpha|,|\beta|)$.
 
The algebra $\C_m[x]$ is finite-dimensional over $\C$, and the action of the group is by powers of the discrete generators and exponentials of the derivations. We have to describe the dependence of the matrix elements of the product \eqref{t} when $F_i^*$ are automorphisms and $V_j$ derivations of the finite-dimensional algebra of the truncated polynomials.

We start first with the cyclic (one-parametric) case. The mere fact that matrix elements of a one-parametric subgroup of finite-dimensional linear automorphisms, either discrete $\{L^t\}_{t\in\Z}$ or continuous $\{L^t=\exp tV\}_{t\in\C}$, are quasipolynomials, is well known. 

If $L=D+N$ is a finite-dimensional \emph{invertible} linear operator written as the sum of its commuting diagonal and nilpotent parts, $D=\operatorname{diag}(\mu_1,\dots,\mu_s)$, $N^{k+1}=0$ for some $k\le\dim L$ then for any $t\in\Z$ the binomial series for $(D+N)^t$ becomes a finite sum,
\begin{equation*}
 L^t=D^t+ t D^{t-1}N+\tfrac12{t(t-1)}D^{t-2}N^2+\cdots + \tfrac1{k!}t(t-1)\cdots(t-k+1)\,D^{t-k}N^k.
\end{equation*}
All matrix elements of the powers $D^{t-1},\dots,D^{t-k}$ as functions of $t$ are linear combinations of powers $\mu_1^t,\dots,\mu_s^t$, where $\{\mu_1,\dots,\mu_s\}$ are the eigenvalues of $L=L^1$. Therefore all matrix element of the powers $L^t$, $t\in\Z$, are quasipolynomials in one variable $t$ with the spectrum $\L=\{\l_1,\dots,\l_s\}\subset\C^1$, $\l_i=\ln \mu_i$. The logarithms are well defined, since by the invertibility of $L$, $\mu_i\ne 0$.

For the subgroup with an infinitesimal generator $V$, decomposed as $V=D+N$, $D=\operatorname{diag}(\l_1,\dots,\l_s)$, into the exponential series \eqref{exp} yields
\begin{equation*}
 \mathrm e^{tV}=\mathrm e^{tD}\cdot\mathrm e^{tN}=\mathrm e^{tD}\cdot(\text{matrix polynomial of }t),\qquad\forall t\in\C,
\end{equation*}
since the exponential series for $\mathrm e^{tN}$ is a finite (matrix) polynomial of degree $\le k$. Thus all matrix elements of the exponential $\mathrm e^{tV}$ are quasipolynomials with the spectrum $\L=\{\l_1,\dots,\l_s\}$.

To apply these arguments to the infinite-dimensional algebra $\C[[x]]$, we consider its finite-dimensional truncations. Let $L_m$ be the automorphism of the truncated polynomial algebra $\C_m[x]$ (resp., $V_m$ is a derivation of this algebra), obtained by the truncation of the formal automorphism $L\in\Aut\C[[x]]$, resp., the derivation $V:\C[[x]]\to\C[[x]]$. We claim that the the matrix elements of $L_m^t$, resp., $\mathrm e^{tV_m}$, are quasipolynomials with the spectra generated (as lattices) by the eigenvalues of the matrix $L_1$, resp., $V_1$, as explained above.

Without loss of generality we may assume that the local coordinates are chosen in such way that the linear part of the map (field) is lower-triangular, and use the corresponding order for the \texttt{deg-lex} ordering of the monomials. This means that:
\begin{enumerate}
 \item monomials $x^\alpha$ are ordered in the increasing order of their degrees $|\alpha|=\alpha_1+\cdots+\alpha_d$;
 \item monomials of the same degree are ordered lexicographically;
 \item the lexicographical order of the variables $x_1,\dots,x_d$ is such that $L^1 x_i=\mu_ix_i+ \cdots$, resp., $V_1 x_i=\l_ix_i+\cdots$, where the dots denote \texttt{lex}-inferior linear terms.
\end{enumerate}
By this choice we have
\begin{equation}\label{deglex}
\begin{aligned}
 Lx_i&=\mu_ix_i+\text{(\texttt{lex}-inferior linear terms)}+\text{(monomials of higher \texttt{deg})},
 \\
 Vx_i&=\l_ix_i+\text{(\texttt{lex}-inferior linear terms)}+\text{(monomials of higher \texttt{deg})}.
\end{aligned}
\end{equation}
For any monomial $x^\alpha$ we have, using the homomorphy of $L$ (resp., the Leibnitz rule for $V$) the identities
\begin{equation}\label{triang}
\begin{aligned}
 Lx^\alpha&=\mu^\alpha x^\alpha+\text{(monomials of higher \texttt{deg-lex} order)},
 \\
 Vx^\alpha&=\left<\l,\alpha\right> x^\alpha+\text{(monomials of higher \texttt{deg-lex} order)}.
\end{aligned}
\end{equation}
Here $\mu^\alpha=\mu_1^{\alpha_1}\cdots\mu_d^{\alpha_d}=\mathrm e^{\left<\l,\alpha\right>}$, $\left<\l,\alpha\right>=\l_1\alpha_1+\cdots+\l_d\alpha_d$ and $\l_i=\ln\mu_i$, $i=1,\dots,d$, as explained earlier.

Thus in the lexicographically ordered monomial basis each truncation $L_m$, resp., $V_m$, is triangular finite-dimensional operator with the spectrum formed by multiplicative (resp., additive) combinations of the eigenvalues of $L_1$ (resp., $V_1$) of total degree (resp., total multiplicity) $\le m$.

This observation shows that all matrix elements of $L_m^t$ (resp., $\mathrm e^{tV_m}$) are quasipolynomials in $t$ with the spectrum in the lattice $\L\subset\C$ generated by $\ln \mu_i$ (resp., $\l_i$).

% the Lie derivation $V$ is (block)-upper-triangular:
%\begin{equation*}
% V^*x^\alpha=\left<\l,\alpha\right>x^\alpha+(\text{linear combination of monomials of higher order}).
%\end{equation*}
%In other words, the spectrum of $V$ belongs to the lattice generated by the eigenvalues of the linearization matrix $H$.
%
%If $L=F^*$ is an automorphism associated with the (truncated) polynomial self-map $F(x)=Mx+\dots$, and the linearization matrix $M$ is lower-triangular with the eigenvalues $\mu_1,\dots,\mu_s$, then in a similar way
%\begin{equation*}
%F^* x^\alpha=\mu^\alpha x^\alpha+(\text{linear combination of monomials of higher order}),
%\end{equation*}
%where $\mu^\alpha=\mathrm e^{\left<\alpha,\l\right>}$. These arguments prove the Lemma for the case where $G$ is generated by one flow or iterations of one invertible self-map.

In the group having several commuting generators, the variable $t=(t_1,\dots,t_r)\in\Z^p\times\C^q\subseteq\C^r$ becomes multidimensional. By the above arguments, the matrix elements for the automorphism $G^*(t)$ depend on $t$ in such a way that dependence on each variable $t_i$ separately is quasipolynomial with a finitely generated spectrum $\L_i\subset\C$. One can easily see (e.g., by induction) that a function of several complex variables, quasipolynomial in each variable separately, is quasipolynomial in all variables. The corresponding spectrum $\L$ is the Cartesian product of finitely generated lattices \eqref{Lambda}, which obviously is itself a finitely generated lattice in $\C^r$.
\end{proof}

%\begin{Rem}
%One could avoid treating the integer and complex variables $t_i$ separately by simply embedding the group in the flow. The possibility of the embedding for a cyclic group with only one generator is explained in Remark~\ref{rem:embed}. One can show that a tuple of \emph{commuting} formal self-maps can be simultaneously embedded into a tuple of commuting flows, but this would lead us too far away.
%\end{Rem}

\section{Demonstration and discussion}

\subsection{Demonstration of Theorem~\ref{thm:main}}
Consider a commutative finitely generated group $G:\Z^p\times\C^q\to\operatorname{Diff}[[\C^d,0]]\simeq\operatorname{Aut}(\C[[x_1,\dots,x_d]])$, $p+q=r$, and two finitely generated radical ideals $I=\left<f_1,\dots,f_s\right>$, $J=\left<f_{s+1},\dots,f_n\right>$ in $\C[[x_1,\dots,x_d]]$, associated with the formal subvarieties $X$ and $Y$. From these data one can construct a family of ideals with marked (explicitly selected) generators
\begin{equation*}
 I_t=\left<G(t)^*f_1,\dots,G(t)^*f_s,f_{s+1},\dots,f_n\right>\subset\C[[x]], \qquad t\in\Z^p\times\C^q.
\end{equation*}
The ideal $I_t$ corresponds to the intersection of the formal varieties $g^{-1}(X)$ and $Y$ for $g=G(t)$. Our goal is to prove that the finite values of the codimension
\begin{equation*}
 \mu(t)=\codim_\C I_t, \qquad t\in\Z^p\times\C^q,
\end{equation*}
are bounded.

By Lemma~\ref{lem:quasip}, each coefficient $a_{i\alpha}=a_{i\alpha}(t)$ of each generator $G(t)^*f_i$ of $I_t$ is a quasipolynomial in $t$ with the common spectrum $\L$ defined only by the linear terms of the generators of $G$: $a_{i\alpha}(t)\in\C[\mathrm e^{t\L},t]$. 

Consider the increasing chain of polynomial ideals \eqref{chain}. Each ideal $\mathfrak I_m$ defines polynomial conditions imposed on the coefficients $a_{i\alpha}$ equivalent to the inequality $\codim I\ge m$. Substituting the explicit form of the generators, we obtain the \emph{quasipolynomial} conditions imposed on $I_t$ equivalent to the inequality $\mu(t)\ge m$. These quasipolynomial conditions together form the ideal $\mathscr I_m\subset\C[\mathrm e^{t\L},t]$, which depends on the group $G$ and the initial subvarieties $X,Y$, but in any case they form an increasing chain of ideals
\begin{equation}\label{chain-l}
 \mathscr I_1\subseteq\mathscr I_2\subset\cdots\subseteq\mathscr I_m\subseteq\cdots\subseteq \C[\mathrm e^{t\L},t],\ t\in\C^r.
\end{equation}
Note that strict inclusions between $\mathfrak I_m$ may become non-strict for the ideals $\mathscr I_m$.

%This dependence induces the morphism of $\C$-algebras
%\begin{equation*}
% \f:\mathfrak A\to\mathrm e^{t\L}[t], \qquad \mathfrak A=\C[\dots,a_{i\alpha},\dots],
%\end{equation*}
%uniquely determined by the values it takes on the generators $a_{i\alpha}$ of $\mathfrak A$. The morphism $\f$ depends on $G$ and the choice of the  generators of $I,J$.
%
% defining ideals of increasing codimensions in the infinitely generated algebra $\mathfrak A$ and its $\f$-image
%
%
%Assume that 

By Lemma~\ref{lem:noether}, the chain \eqref{chain-l} stabilizes after some finite number $m$:
\begin{equation*}
 \mathscr I_{m+1}=\mathscr I_{m+2}=\cdots=\mathscr I_{m+k}=\cdots\subseteq \C[\mathrm e^{t\L},t].
\end{equation*}
The values of $t$ which satisfy these (stable) quasipolynomial conditions correspond to the ideals $I_t$ of infinite codimension, $\mu(t)=+\infty$. This means that any \emph{finite} value of the codimension $\mu(t)$ cannot be bigger than $m$. \qed

\subsection{Generalizations}
The construction proving the main result, can be obviously modified to deal with multiple intersections between independently evolving subvarieties. For the sake of diversity we formulate this result for the holomorphic rather than formal dynamics (the $\C^\infty$-smooth version is also true for exactly the same reasons as before).

Let $G_1,\dots,G_k\subseteq\Diff(\C^d,0)$ be a tuple of commutative finitely generated subgroups of holomorphic self-maps, not necessarily disjoint, and $X_1,\dots,X_k$ a tuple of germs of analytic subvarieties of arbitrary dimensions. For any choice of elements $g_i\in G_i$ consider the multiplicity of the intersection between $g_i^{-1}(X_i)$ at the origin, defined as the codimension of the corresponding ideal in $\O$. This is a function $\mu=\mu(g_1,\dots,g_s)$ from $G_1\times\cdots\times G_k$ to $\N^*=\N\cup\{\infty\}$.

\begin{Thm}\label{thm:gener}
The finite values of the function $\mu$ are bounded.\qed
\end{Thm}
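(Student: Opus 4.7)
The plan is to imitate the proof of Theorem~\ref{thm:main} with the single parameter $t$ replaced by the direct product of the parameters of the $k$ groups. First I would pick finite generators $f_{i1},\dots,f_{in_i}$ of each ideal $I_{X_i}\subset\O$ and parametrize each $G_i$ as in \eqref{t}--\eqref{zc} by $t^{(i)}\in\Z^{p_i}\times\C^{q_i}$. The multiple intersection multiplicity then equals the codimension of the joint ideal
\[
 I_t=\left<G_i(t^{(i)})^* f_{ij}\ :\ i=1,\dots,k,\ j=1,\dots,n_i\right>,
\]
regarded as a function of $t=(t^{(1)},\dots,t^{(k)})\in\Z^{p_1+\cdots+p_k}\times\C^{q_1+\cdots+q_k}$.

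Next, Lemma~\ref{lem:quasip} applied to each group separately shows that every Taylor coefficient of $G_i(t^{(i)})^* f_{ij}$ is a quasipolynomial in $t^{(i)}$ with spectrum in a finitely generated lattice $\L_i\subseteq\C^{r_i}$, $r_i=p_i+q_i$, determined only by the linearizations of the generators of $G_i$. Since the blocks $t^{(1)},\dots,t^{(k)}$ are independent, the ``separately implies jointly'' observation from the last paragraph of the proof of Lemma~\ref{lem:quasip} shows that these coefficients are jointly quasipolynomial in the full variable $t\in\C^r$, $r=\sum_i r_i$, with spectrum in the Cartesian product $\L=\L_1\times\cdots\times\L_k$. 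This product lattice is again finitely generated, so by Lemma~\ref{lem:noether} the ambient algebra $\C[\mathrm e^{t\L},t]$ is Noetherian.

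Finally, Lemma~\ref{lem:mu-alg} applied to the universal family of ideals with $n=n_1+\cdots+n_k$ generators gives, for each $m\in\N$, a finite list of polynomial identities on Taylor coefficients equivalent to $\codim I\ge m$. Pulling these back through the quasipolynomial substitution produces an ascending chain
\[
 \mathscr I_1\subseteq\mathscr I_2\subseteq\cdots\subseteq\C[\mathrm e^{t\L},t]
\]
of ideals in this Noetherian algebra, which therefore stabilizes at some level $m^\star$. Exactly as in the proof of Theorem~\ref{thm:main}, the common zero locus of the stabilized ideal is the set $\{t:\mu(t)=+\infty\}$, so every finite value of $\mu$ is at most $m^\star$.

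There is no serious obstacle: the only point deserving a moment's thought is verifying the joint quasipolynomial structure with product spectrum when independent parameter blocks act on unrelated ideal generators, but this is exactly the reduction already used inside the proof of Lemma~\ref{lem:quasip} itself. Everything else is a literal transcription of the one-group argument, so the same Noetherian-chain mechanism delivers the uniform bound.
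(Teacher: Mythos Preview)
Your proposal is correct and follows exactly the approach the paper intends: the paper itself records no separate proof, stating only that ``the proof remains literally the same'' as for Theorem~\ref{thm:main}, and what you have written is precisely that literal transcription. If anything, the one point you flag as needing thought is even simpler than you suggest: since $G_i(t^{(i)})^* f_{ij}$ depends only on the block $t^{(i)}$ and is constant in the remaining variables, each Taylor coefficient is already a quasipolynomial in the full variable $t$ without invoking the ``separately implies jointly'' step.
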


Theorem~\ref{thm:main} is a particular case of Theorem~\ref{thm:gener} for $s=2$, $G_1=G$ and $G_2=\{\operatorname{id}\}$. The proof remains literally the same.

\subsection{Non-commutative groups on $(\C^1,0)$}
The most interesting of all assumptions of Theorem~\ref{thm:main} is that of the commutativity of the group $G$: clearly, dropping the assumptions on finite generation immediately destroys the discussed phenomenon (consider, e.g., the action of the full group $\Diff(\C^d,0)$ on linear subspaces of appropriate dimensions). One can easily find examples of non-commutative groups for which the finite multiplicity intersections are unbounded.

\begin{Ex}\label{ex:nonsolv}
Let $G\subseteq\Diff(\C^1,0)$ be the subgroup generated by two non-identical holomorphic germs tangent to the identity:
\begin{equation*}
 G=\left<g_1,g_2\right>,\qquad g_i(x)=x+c_ix^{\nu_i+1}+\cdots, \quad c_i\ne 0,\ \nu_i\in\N, \ i=1,2.
\end{equation*}
One can immediately verify by the direct computation, that the commutator $g_3=[g_1,g_2]$ has a higher order of tangency to the identity, provided that $\nu_1\ne \nu_2$:
\begin{equation*}
 g_3(x)=x+(c_1c_2)(\nu_1-\nu_2)x^{\nu_1+\nu_2+1}+\cdots=x+c_3x^{\nu_3+1}+\cdots,\quad \nu_3=\nu_1+\nu_2,
\end{equation*}
see \cite{thebook}*{Proposition 6.11}. This implies that the group $G$ is non-solvable: together with the germ of order $\nu_3=\nu_1+\nu_2>\max\{\nu_1,\nu_2\}$ the commutator $[G,G]$ contains infinitely many germs of increasing finite orders, and this construction can be applied to all higher commutators as well.

Clearly, the multiplicity of the fixed point at the origin is unbounded, in contrast with the Shub--Sullivan theorem: if $G$ were cyclic generated by $g_1$, then  all non-identical elements will be of the form $g_1^n(x)=x+nc_1x^{\nu_1+1}+\cdots$ with the same multiplicity $\nu_1+1$ of the fixed point at the origin, hence $G\simeq\Z$.
\end{Ex}

Based on this example, one can construct (as described in the introduction) a non-solvable subgroup of $\Diff(\C^2,0)$ with the unbounded multiplicity of intersections.

For subgroups of $\Diff(\C^1,0)$ the question of maximal multiplicity of a periodic point at the origin in fact admits a complete solution, based on the known formal classification of such subgroups \cite{thebook}*{\parasymbol 6}.

\begin{Thm}\label{thm:alter}
Let $G=\left<g_1,\dots,g_n\right>\subseteq\Diff[[\C^1,0]]$ be a finitely generated subgroup of the group of formal 1-dimensional self maps. Then the finite values of the multiplicity of the periodic point at the origin are uniformly bounded from above if and only if the group $G$ is metabelian, i.e., its commutator $[G,G]$ is commutative.
\end{Thm}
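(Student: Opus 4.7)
My plan is to prove the two implications separately, using as a black box the formal \'Ecalle--Voronin classification of one-dimensional germs recorded in \cite{thebook}*{\parasymbol 6}: for any non-identity tangent-to-identity germ $g$, its centralizer within the subgroup $D := \{f \in \Diff[[\C^1,0]] : f'(0)=1\}$ of tangent-to-identity formal maps coincides with the formal flow of a unique formal vector field, hence is abelian. The quotient $\Diff[[\C^1,0]]/D \cong \C^*$ is abelian, so $[G,G] \subseteq D$. An element $g \in G$ with $g'(0) \neq 1$ has fixed-point multiplicity one, so only $G_0 := G \cap D$ can contribute to growing multiplicities; for $g \in G_0 \setminus \{\operatorname{id}\}$ the multiplicity equals $\nu(g)+1$, where $\nu(g)$ is the order of tangency to the identity. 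The arithmetic tool throughout is the commutator estimate from Example~\ref{ex:nonsolv}: $\nu([f,g]) \ge \nu(f)+\nu(g)$ whenever $f,g \in D$ and $[f,g] \neq \operatorname{id}$.

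For the direction ``not metabelian $\Longrightarrow$ unbounded'', I would construct a sequence $c_0,c_1,\dots \in [G,G]$ with $\nu(c_k) \to \infty$. Since $[G,G] \subseteq D$ is non-abelian, choose $a,b \in [G,G]$ with $c_0 := [a,b] \neq \operatorname{id}$, and set $x_0 := a$, $y_0 := b$. Inductively, given $c_k = [x_k,y_k] \neq \operatorname{id}$, the centralizer of $c_k$ in $D$ is abelian by \'Ecalle--Voronin, so $x_k$ and $y_k$ cannot both belong to it (else $[x_k,y_k] = \operatorname{id}$); pick whichever does not and call it $x_{k+1}$, set $y_{k+1} := c_k$, and define $c_{k+1} := [x_{k+1},y_{k+1}] \neq \operatorname{id}$. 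The commutator estimate yields $\nu(c_{k+1}) \ge \nu(x_{k+1}) + \nu(c_k) \ge 1 + \nu(c_k)$, so $\nu(c_k) \to \infty$ and the elements $c_k \in [G,G] \subseteq G$ witness unbounded finite multiplicities.

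For ``metabelian $\Longrightarrow$ bounded'', it suffices to bound $\nu$ on $G_0$, which is itself metabelian as a subgroup of $G$. The sub-claim I would establish is that \emph{every} metabelian subgroup $H \subseteq D$ is in fact abelian. Suppose not: then $[H,H]$ is non-trivial abelian. Fix $g \in [H,H]\setminus\{\operatorname{id}\}$ and let $F = \{\mathrm e^{t v_0} : t \in \C\}$ be the centralizer of $g$ in $D$. Since $[H,H]$ is abelian, $[H,H] \subseteq F$. For each $h \in H$ conjugation by $h$ sends $[H,H]$ into $[H,H] \subseteq F$, so in particular $h g h^{-1} \in F$; passing to formal logarithms this forces $h_* v_0 = \lambda(h)\, v_0$ for some $\lambda(h) \in \C^*$. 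A leading-order computation with $v_0(x) = x^{\nu(v_0)+1}+\cdots$ and $h(x) = x+O(x^2)$ gives $\lambda(h) = 1$, so $h$ commutes with $F$ and hence $h \in F$ by a second application of \'Ecalle--Voronin. Thus $H \subseteq F$ would be abelian, contradicting $[H,H] \neq \operatorname{id}$. Applied to $H = G_0$, this shows $G_0$ lies in a single formal flow, so every non-identity element of $G_0$ has $\nu = \nu(v_0)$, yielding the desired uniform bound.

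The main obstacle is the \'Ecalle--Voronin centralizer theorem, invoked as a black box: it simultaneously guarantees that the iteration in the non-metabelian direction never stalls (without abelianity of the centralizer, both parents $x_k,y_k$ could lie in it while still producing $c_k \neq \operatorname{id}$, and there would be no reason for the order to grow) and forces the rigidity $h \in F$ that collapses a putative non-abelian metabelian subgroup of $D$ to a single abelian flow. Everything else is elementary commutator algebra together with one leading-order pushforward computation.
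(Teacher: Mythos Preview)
Your argument is correct and essentially complete, but it follows a genuinely different route from the paper's own proof. The paper simply invokes two structural results from \cite{thebook} as black boxes: a Tits-type alternative (Theorem~6.10) asserting that every finitely generated subgroup of $\Diff[[\C^1,0]]$ is either metabelian or already contains a pair of germs as in Example~\ref{ex:nonsolv}, and a normal-form theorem (Theorem~6.22) asserting that every metabelian subgroup is formally conjugate to a subgroup of $\{x\mapsto c\,\mathrm e^{tV}(x):c\in\C^*,\ t\in\C\}$ for a single field $V$. You instead rely on just one ingredient---the formal embedding/centralizer fact that the centralizer in $D$ of any nontrivial tangent-to-identity germ is a single formal flow---and rebuild the needed structure by hand: the iterated-commutator descent for the non-metabelian direction, and the lemma that any metabelian subgroup of $D$ is already abelian for the other. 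Your route is thus more self-contained (it effectively unpacks the cited theorems down to the centralizer fact plus the commutator estimate of Example~\ref{ex:nonsolv}), at the price of a longer argument; the paper's route is a two-line deduction from heavier citations. One terminological quibble: the black box you actually use is the elementary \emph{formal} embedding theorem (unique formal logarithm of a tangent-to-identity germ and the resulting description of its centralizer); the name ``\'Ecalle--Voronin'' properly refers to the much deeper analytic classification, which plays no role here.
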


\begin{proof}
The assertion of Theorem~\ref{thm:alter} follows immediately from the following version of the Tits alternative for one-dimensional self-maps, see \cite{thebook}*{Theorem 6.10}: any finitely generated subgroup of $\Diff[[\C^1,0]]$ is either metabelian or non-solvable (and contains a subgroup isomorphic to that described in Example~\ref{ex:nonsolv}). Thus in the non-metabelian case the intersection is unbounded.

A metabelian group is formally equivalent to a subgroup $G_\nu=\{c\mathrm e^{tV}:c\in\C^*, t\in\C\}$ generated by the linear self-maps $x\mapsto cx$ and the flow $\mathrm e^{tV}$ of a single formal vector field $V(x)=ax^\nu+\cdots$, $a\ne 0$, that is, a semidirect product of $\C^*\times\C$, see \cite{thebook}*{Theorem 6.22}. Clearly, the multiplicity of the periodic point at origin is the same number $\nu+1$ for all nontrivial elements of the group.
\end{proof}

\subsection{Some open problems}
As follows from Example~\ref{ex:nonsolv}, for non-solvable subgroups of $\Diff[[\C^d,0]]$ the function $\mu:G\to\N^*$ can exhibit unbounded growth of finite values. It is unclear whether what can be the growth rate of the function $\mu$ (e.g., how fast can it grow with the length of the words representing elements of the group grows to infinity). No meaningful examples were considered thus far, although one could reasonably expect that the growth will be fastest for groups with free subgroups and slowest for solvable.

It would also be interesting to upgrade the qualitative boundedness theorems for commutative subgroups to quantitative explicit estimates. For instance, if the germs $F_i:\C^d\to\C^d$ are polynomial of degree $\le \ell$ and the varieties $X,Y$ are algebraic of degree not exceeding the same (for simplicity) number $\ell$, then it is natural to look for an upper bound for multiplicities of the isolated intersections $F^n(X)\cap Y$ at the origin in terms of $\ell$ and $d$ (note that $F_i^{-1}$ need not be polynomial).

\section*{Appendix. The parametric case}

To address the parametric case, we assume that the generators $F_1,\dots,F_p$ and $v_{1},\dots,v_{q}$ of a group $G\subset\Diff[[\C^d,0]]$ depend analytically on finitely many parameters $\e=(\e_1,\dots,\e_\ell)$ varying in an arbitrarily small neighborhood of the origin in the parameter space, $\e\in(\C^\ell,0)$, so that each Taylor coefficient of each corresponding formal series depends analytically on $\e$. In the same way we allow the initial subvarieties $X=X(\e)$ and $Y=Y(\e)$ depend analytically on $\e$ in the sense that they are defined by functional equations $f_i=0$ analytically depending on $\e$, $f_i\in\mathscr O(\C^\ell,0)[[x]]$. Then (under some technical assumptions) the multiplicity of intersection becomes a function of $t\in\Z^p\times\C^q$ which additionally depends on the parameter $\e\in(\C^\ell,0)$ with values in $\N^*=\N\cup\{+\infty\}$. The conclusion is the same as in the non-parametric case: the finite values of the function $\mu$ are locally uniformly bounded: there exists a constant $m\in\N$ \emph{independent of $\e$}, such that all whenever $\mu(t,\e)<+\infty$, then $\mu(t,\e)\le m$ for any $t,\e$.

In order to be able to talk about ideals of infinite codimension for $\e\ne0$, we have to assume explicitly that the coefficients of the generators (initially defined as the germs from $\mathscr O(\C^\ell,0)$) admit representatives defined in some common neighborhood of the origin. The corresponding ring is smaller than $\mathscr O(\C^\ell,0)[[x]]$ but slightly larger than $\mathscr O(\C^\ell,0)\otimes_\C \C[[x]]$. In order to avoid technical difficulties, we will consider only the analytic case, so that the principal ring will be that of the germs $\mathscr O(\C^{\ell+d},0)$ of functions analytical in both $x$ and $\e$.

Very roughly, the idea of the proof is to replace the field of constants $\C$ by the field of meromorphic germs, the field of fractions of the ring $\mathscr O(\C^\ell,0)$, properly extended by finitely many ``algebraic germs''. The principal difficulty lies in the fact that the Jordan normal form is ill-depending on the parameters, thus the notion of a quasipolynomial has to be generalized. We briefly outline below the main changes required for this.

Assume as before, that
\begin{equation}
 F_i(\e,x)=M_i(\e)x+\cdots,\ i=1,\dots,p,\quad v_i(x)=H_i(\e)x+\cdots,\ i=1,\dots,q,
\end{equation}
where the $d\times d$-matrices $M_i(\e),H_i(\e)$ depend analytically on $\e\in(\C^\ell,0)$ and the dots stand for nonlinear terms.

Consider the $\C$-algebra of germs $\mathscr O(\C^\ell,0)$ and its field of fractions $\mathscr M(\C^\ell,0)$, the field of meromorphic germs. Consider the characteristic polynomials for the matrices $M_1(\e),\dots,M_p(\e),H_1(\e),\dots,H_q(\e)$, the monic polynomials with coefficients in $\mathscr O(\C^\ell,0)$. Let $\mathscr O_G(\C^\ell,0)$ be the ring obtained by adjoining the roots of all these polynomials to the ring $\mathscr O(\C^\ell,0)$, and denote by $\Bbbk_G$ or simply $\Bbbk$ the corresponding finite algebraic extension of the field $\mathscr M(\C^\ell,0)$. Elements of $\Bbbk_G$ can be considered as germs of algebraic functions of $\e$.

By the Jordan--Chevalley theorem \cite{humphreys}*{\parasymbol 4.2}, any matrix with entries in the ring $\mathscr O(\C^\ell,0)$ can be uniquely represented as a sum of two commuting terms (also with holomorphic entries), one semisimple and the other nilpotent. The semisimple part by definition is diagonalizable over the field $\Bbbk_G$ of algebraic germs. This forces to consider quasipolynomials over this larger field.

\begin{Def}
A quasipolynomial over the field $\Bbbk=\Bbbk_G$ as above is a \emph{finite} sum of the form 
\begin{equation}\label{pqp}
 q(z)=\sum \mathrm e^{\left<\boldsymbol\l_i,z\right>}p_i(z),\qquad z\in\C^r,\ \e\in(\C^\ell,0),
\end{equation} 
where:
\begin{enumerate}
 \item $\boldsymbol\l_i\in\Bbbk^r$ are $r$-tuples of elements from the field $\Bbbk$,
 \item $p_i\in\Bbbk[z]$ are polynomials in $z=(z_1,\dots,z_r)$ with coefficients in $\Bbbk$.
 \item the finite set $\L\subseteq\Bbbk^r$ of all exponentials $\boldsymbol\l_i$ is called the spectrum of the quasipolynomial.
\end{enumerate}
\end{Def}
Quasipolynomials can be considered as germs of (multivalued) functions on $(\C^\ell,0)\times\C^r$. 
If $\L\subseteq\Bbbk^r$ is a lattice, the set of all quasipolynomials over $\Bbbk_G$ with the spectrum in $\L$ is a $\C$-algebra $\Bbbk[\mathrm e^{z\L},z]$. 
 
After all these modifications are made, the following generalization of Lemma~\ref{lem:quasip} is proved by the same arguments as before, mutatis mutandis (cf.~with Remark~\ref{rem:lattice-desc}).

\begin{Lem}\label{lem:quasip-p}
If $G:\Z^p\times\C^q\to\Diff(\C^{d+\ell},0)$ is a finitely generated commutative subgroup of germs analytically depending on parameters and $\Bbbk=\Bbbk_G$ the corresponding finite algebraic extension of the field of meromorphic germs, then each Taylor coefficient of any orbit $G(t)^*f$, $f\in\mathscr O(\C^{d+\ell},0)$, is a quasipolynomial over $\Bbbk$ in the variables $t=(t_1,\dots,t_r)$ with the spectrum $\L\in\Bbbk^r$ which is a finitely generated lattice.\qed
\end{Lem}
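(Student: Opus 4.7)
The plan is to run the proof of Lemma~\ref{lem:quasip} verbatim with the ground field $\C$ replaced by the finite algebraic extension $\Bbbk=\Bbbk_G$ of $\mathscr M(\C^\ell,0)$, and the finite-dimensional truncation $\C_m[x]$ replaced by $\mathscr O(\C^\ell,0)\otimes_\C\C_m[x]$, which after extension of scalars becomes a free $\Bbbk$-module of finite rank. The same reduction as before shows that the $(\alpha,\beta)$-matrix element of $G(t)^*$ is determined by its action on this truncation for $m\ge\max(|\alpha|,|\beta|)$, so the problem reduces to describing $\e$-dependent automorphisms and derivations of a finite-rank module.

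The essential substitution is to replace the use of the Jordan normal form, which is discontinuous in $\e$, by the Jordan--Chevalley decomposition: every matrix with entries in $\mathscr O(\C^\ell,0)$ splits uniquely into a sum of commuting semisimple and nilpotent parts, each still with entries in $\mathscr O(\C^\ell,0)$. By the very construction of $\Bbbk$, every eigenvalue of every linearization $M_i(\e)$ and $H_i(\e)$ lies in $\Bbbk$, so after extension of scalars the semisimple part becomes diagonal; the logarithms $\ln\mu\in\Bbbk$ of invertible eigenvalues are well defined modulo $2\pi\mathrm i\Z$. These produce the partial lattices $\L_i\subset\Bbbk$ of Remark~\ref{rem:lattice-desc}, now living in $\Bbbk$ instead of $\C$. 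With this in hand, the cyclic analyses from the proof of Lemma~\ref{lem:quasip} translate word-for-word: the finite binomial expansion $L^t=\sum_{k=0}^{\nu}\binom{t}{k}D^{t-k}N^k$ and the factorization $\mathrm e^{tV}=\mathrm e^{tD}\cdot\mathrm e^{tN}$ both produce matrix entries that are $\Bbbk$-polynomial combinations of exponentials $\mathrm e^{t\l}$ with $\l\in\L_i$; the \texttt{deg-lex} triangularization then yields quasipolynomial matrix entries for $G(t)^*$ on $\C_m[x]\otimes_\C\Bbbk$ with diagonal entries of the form $\mathrm e^{\left<\l,\alpha\right>}$, $|\alpha|\le m$. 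Commutativity allows the factorization $G(t)=G_1(t_1)\circ\cdots\circ G_r(t_r)$, so one-variable quasipolynomiality in each $t_i$ separately combines into joint quasipolynomiality over $\Bbbk$ with spectrum in the finitely generated product lattice $\L=\L_1\times\cdots\times\L_r\subset\Bbbk^r$.

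The main obstacle is keeping the algebraic data coherent: all eigenvalues, their logarithms, and the $\Bbbk$-linear coordinate changes used for the \texttt{deg-lex} triangularization must live inside the \emph{same} finite extension $\Bbbk_G$, independently of the truncation degree $m$ and of the element of $G$ under consideration. This is ensured because the eigenvalues of the induced action on $\C_m[x]$ are monomial combinations of the eigenvalues of the linear action on $\C^d$, which by construction lie in $\Bbbk_G$; taking logarithms introduces only the additional generator $2\pi\mathrm i$ of the discrete partial lattices, not any new field extensions. Once this coherence is verified, the remaining bookkeeping---matching coefficients to elements of $\Bbbk[t]$ and exponents to elements of $\L$---is identical to the non-parametric proof.
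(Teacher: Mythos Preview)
Your proposal is correct and follows essentially the same approach as the paper, which itself only indicates that Lemma~\ref{lem:quasip-p} is proved ``by the same arguments as before, \emph{mutatis mutandis}'' after replacing the Jordan normal form by the Jordan--Chevalley decomposition over the extension $\Bbbk_G$. You have simply spelled out in more detail what those \emph{mutatis mutandis} modifications are---the passage to the finite-rank $\Bbbk$-module, the binomial and exponential expansions over $\Bbbk$, the \texttt{deg-lex} triangularization, and the coherence check that all eigenvalues of all truncations remain in the single fixed extension $\Bbbk_G$---and these are exactly the steps the paper's outline implies.
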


By this lemma, one can in the same way as before construct from the universal chain of ideals \eqref{chain} the chain of ideals $\mathscr I_m$ in the corresponding quasipolynomial ring $\Bbbk[\mathrm e^{t\L},t]$, determining the condition that the corresponding multiplicity $\mu(t,\e)$ is greater or equal to $m$. Replacing rational expressions by polynomial, we finally obtain an ascending chain of ideals $\mathscr J_m$ in the ring $\mathscr O_G(\C^\ell,0)[\mathrm e^{t\L},t]$, whose length until full stabilization is an upper bound for the multiplicity of intersection.

But since the ring $\mathscr O(\C^\ell,0)$ is Noetherian, so is its finite extension $\mathscr O_G(\C^\ell,0)$, and by the same token as before, the chain of quasipolynomial ideals $\mathscr J_m$ must stabilize. This proves the following parametric form of Theorem~\ref{thm:main}.

\begin{Thm}\label{thhm:main-p}
If $G$ is a finitely generated commutative subgroup of $\Diff(\C^d,0)$ with generators analytically depending on finitely many parameters $\e\in(\C^\ell,0)$, and $X(\e),Y(\e)$ two germs of analytic subvarieties in $(\C^d,0)$ also depending analytically on these parameters, then the finite values of the multiplicity of intersection between $G(t)X$ and $Y$ are locally uniformly bounded.\qed
\end{Thm}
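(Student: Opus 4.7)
The plan is to repeat the architecture of the non-parametric demonstration almost verbatim, while twice enlarging the coefficient ring: first from $\C$ to the ring $\mathscr O(\C^\ell,0)$ of analytic germs in the parameters, and second to the finite integral extension $\mathscr O_G(\C^\ell,0)$ which contains all eigenvalues of the linearizations $M_i(\e)$, $H_j(\e)$ of the generators. Over this second ring the parametric quasipolynomial description from Lemma~\ref{lem:quasip-p} is available, and the quasipolynomial bookkeeping proceeds as in the non-parametric case.

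I would start by forming, for each $(t,\e)\in(\Z^p\times\C^q)\times(\C^\ell,0)$, the family of ideals
\begin{equation*}
 I_{t,\e}=\bigl\langle G(t)^*f_1,\ldots,G(t)^*f_s,\,f_{s+1},\ldots,f_n\bigr\rangle,
\end{equation*}
defining the intersection $G(t)^{-1}X(\e)\cap Y(\e)$, and apply Lemma~\ref{lem:mu-alg} pointwise in $(t,\e)$ to obtain the universal chain $\mathfrak I_1\subsetneq\mathfrak I_2\subsetneq\cdots\subset\mathfrak A$, whose $m$-th term expresses the condition $\mu(t,\e)\ge m$ as a finite list of polynomial identities in the Taylor coefficients $a_{i\alpha}$ of the generators of $I_{t,\e}$. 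By Lemma~\ref{lem:quasip-p} each such $a_{i\alpha}=a_{i\alpha}(t,\e)$ is a quasipolynomial in $t$ over $\Bbbk=\Bbbk_G$ with spectrum in a single finitely generated lattice $\L\subset\Bbbk^r$. Substituting these quasipolynomial expressions into the generators of the $\mathfrak I_m$ yields an ascending chain of ideals $\mathscr I_m\subseteq\Bbbk[\mathrm e^{t\L},t]$, and then clearing denominators (by multiplying each generator by a suitable nonzero element of $\mathscr O_G(\C^\ell,0)$) transports this into an ascending chain of ideals $\mathscr J_1\subseteq\mathscr J_2\subseteq\cdots$ in the ring $\mathscr R=\mathscr O_G(\C^\ell,0)[\mathrm e^{t\L},t]$, whose common zero set in $(t,\e)$ coincides with $\{\mu(t,\e)\ge m\}$.

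It remains to observe that $\mathscr R$ is Noetherian: $\mathscr O(\C^\ell,0)$ is a Noetherian local ring, its finite integral extension $\mathscr O_G(\C^\ell,0)$ is therefore also Noetherian, and the same surjection used in the proof of Lemma~\ref{lem:noether}, now from the polynomial algebra $\mathscr O_G(\C^\ell,0)[w_1,\ldots,w_s,z_1,\ldots,z_r]$ onto $\mathscr R$, combined with Hilbert's basis theorem, yields Noetherianity of $\mathscr R$. The chain $\mathscr J_m$ therefore stabilizes at some finite $m_0$ \emph{independent of $\e$}, and the stabilized common zero set equals $\{\mu(t,\e)=+\infty\}$; any $(t,\e)$ with finite multiplicity must then satisfy $\mu(t,\e)<m_0$, which is the required uniform bound. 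The principal technical obstacle is the passage to the algebraic extension $\Bbbk_G$: the Jordan blocks of $M_i(\e)$ and $H_j(\e)$ may degenerate at special parameter values, so the triangularization \eqref{triang} underpinning Lemma~\ref{lem:quasip-p} must be carried out over $\Bbbk_G$ rather than over $\C$, and here one appeals to the Jordan--Chevalley decomposition over $\mathscr O_G(\C^\ell,0)$. A secondary, milder point is that clearing denominators can introduce spurious zeros on the pole divisor of the substituted quasipolynomial coefficients, but this divisor is already contained in $\{\mu(t,\e)=+\infty\}$ and does not affect the bound on finite multiplicities.
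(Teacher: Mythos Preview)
Your proposal follows the paper's Appendix essentially line for line: pass to the finite extension $\mathscr O_G(\C^\ell,0)$ and its fraction field $\Bbbk_G$ via Jordan--Chevalley, invoke Lemma~\ref{lem:quasip-p}, substitute into the universal chain~\eqref{chain}, clear denominators to land in $\mathscr O_G(\C^\ell,0)[\mathrm e^{t\L},t]$, and conclude by Noetherianity of this ring. Your closing aside---that the pole divisor introduced by clearing denominators already lies in $\{\mu=+\infty\}$---is not asserted in the paper and is not obviously justified, but the paper's own sketch is silent on this point, so your treatment is at least as complete as the original.
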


\begin{bibdiv}
\begin{biblist}
\bib{a-miln}{article}{    author={Arnol{\cprime}d, V. I.},    title={Bounds for Milnor numbers of intersections in holomorphic    dynamical systems},    conference={       title={Topological methods in modern mathematics},       address={Stony Brook, NY},       date={1991},    },    book={       publisher={Publish or Perish},       place={Houston, TX},    },    date={1993},    pages={379--390},    review={\MR{1215971 (94i:32039)}}, } 		

\bib{a-smooth}{article}{    author={Arnol{\cprime}d, V. I.},    title={Dynamics of complexity of intersections},    journal={Bol. Soc. Brasil. Mat. (N.S.)},    volume={21},    date={1990},    number={1},    pages={1--10},    issn={0100-3569},    review={\MR{1139553 (93c:58031)}},    doi={10.1007/BF01236277}, } 		

\bib{a-boston}{article}{    author={Arnol{\cprime}d, V. I.},    title={Dynamics of intersections},    conference={       title={Analysis, et cetera},    },    book={       publisher={Academic Press},       place={Boston, MA},    },    date={1990},    pages={77--84},    review={\MR{1039340 (91f:58010)}}, } 	

\bib{odo}{book}{    author={Arnol{\cprime}d, V. I.},    title={Ordinary differential equations},    series={Springer Textbook},    publisher={Springer-Verlag},    place={Berlin},    date={1992},    pages={334},    isbn={3-540-54813-0},    review={\MR{1162307 (93b:34001)}}, } 		

%\bib{arn-families}{article}{
%   author={Arnol{\cprime}d, V. I.},
%   title={Matrices depending on parameters},
%   language={Russian},
%   journal={Uspehi Mat. Nauk},
%   volume={26},
%   date={1971},
%   number={2(158)},
%   pages={101--114},
%   issn={0042-1316},
%   review={\MR{0301242 (46 \#400)}},
%}

\bib{arn-local}{article}{
   author={Arnol{\cprime}d, V. I.},
   title={Local problems of analysis},
   language={Russian},
   journal={Vestnik Moskov. Univ. Ser. I Mat. Meh.},
   volume={25},
   date={1970},
   number={2},
   pages={52--56},
   issn={0201-7385},
   review={\MR{0274875 (43 \#633)}},
}

\bib{avg-1}{book}{    
 author={Arnol{\cprime}d, V. I.},  
 label={Aea},  
 author={Gusein-Zade, S. M.},    
 author={Varchenko, A. N.},    
 title={Singularities of differentiable maps. Vol. I},    
 series={Monographs in Mathematics},    
 volume={82},    
 note={The classification of critical points, caustics and wave fronts},    
 publisher={Birkh\"auser Boston Inc.},    
 place={Boston, MA},    
 date={1985},    
 pages={xi+382},    
 isbn={0-8176-3187-9},    
 review={\MR{777682 (86f:58018)}},     
} 		

\bib{bn-11}{article}{
 author={Binyamini, Gal},
 author={Novikov, Dmitry},
 title={Intersection multiplicities of {N}oetherian functions},
 journal={Advances in Mathematics},
 volume={231},
 number={6},
 date={2012-12-20},
 pages={3079–-3093},
 eprint={http://arxiv.org/abs/1108.1700}
}

\bib{skolem}{book}{    author={Everest, Graham},  label={Eea},  author={van der Poorten, Alf},    author={Shparlinski, Igor},    author={Ward, Thomas},    title={Recurrence sequences},    series={Mathematical Surveys and Monographs},    volume={104},    publisher={American Mathematical Society},    place={Providence, RI},    date={2003},    pages={xiv+318},    isbn={0-8218-3387-1},    review={\MR{1990179 (2004c:11015)}}, }

\bib{gignac}{article}{
author = {Gignac, William},
title = {On the growth of local intersection multiplicities in holomorphic dynamics: a conjecture of {A}rnold},
journal = {arXiv:1212.5272 [math.DS]},
eprint = {http://arxiv.org/abs/1212.5272},
year = {2012},
}

\bib{humphreys}{book}{
   author={Humphreys, James E.},
   title={Introduction to Lie algebras and representation theory},
   note={Graduate Texts in Mathematics, Vol. 9},
   publisher={Springer-Verlag},
   place={New York},
   date={1972},
   pages={xii+169},
   review={\MR{0323842 (48 \#2197)}},
}

\bib{thebook}{book}{    author={Ilyashenko, Yulij},    author={Yakovenko, Sergei},    title={Lectures on analytic differential equations},    series={Graduate Studies in Mathematics},    volume={86},    publisher={American Mathematical Society},    place={Providence, RI},    date={2008},    pages={xiv+625},    isbn={978-0-8218-3667-5},    review={\MR{2363178 (2009b:34001)}}, }

\bib{katok}{book}{    author={Katok, Anatole},    author={Hasselblatt, Boris},    title={Introduction to the modern theory of dynamical systems},    series={Encyclopedia of Mathematics and its Applications},    volume={54},    note={With a supplementary chapter by Katok and Leonardo Mendoza},    publisher={Cambridge University Press},    place={Cambridge},    date={1995},    pages={xviii+802},    isbn={0-521-34187-6},    review={\MR{1326374 (96c:58055)}}, }

\bib{sul-shub}{article}{    author={Shub, Michael},    author={Sullivan, Dennis},    title={A remark on the Lefschetz fixed point formula for differentiable    maps},    journal={Topology},    volume={13},    date={1974},    pages={189--191},    issn={0040-9383},    review={\MR{0350782 (50 \#3274)}}, }

\bib{zar-sam}{book}{
   author={Zariski, Oscar},
   author={Samuel, Pierre},
   title={Commutative algebra. Vol. II},
   note={Reprint of the 1960 edition;
   Graduate Texts in Mathematics, Vol. 29},
   publisher={Springer-Verlag},
   place={New York},
   date={1975},
   pages={x+414},
   review={\MR{0389876 (52 \#10706)}},
}
	
\end{biblist}
\end{bibdiv}

\end{document}